\documentclass[oneside]{amsart}
\usepackage{latexsym, amsmath, amssymb, amsthm, mathrsfs}
\usepackage[dvipsnames]{xcolor}

\usepackage{graphicx}
\usepackage{comment}

\usepackage[colorlinks=true,linkcolor=blue,urlcolor=blue, citecolor=blue]%
{hyperref}


\pagestyle{plain} 
\usepackage{tikz-cd}
\usepackage{tikz}
\usetikzlibrary{calc}

\usetikzlibrary{shapes.geometric} 

\usepackage{caption}
\usepackage{subcaption}
\usepackage{nicematrix}
\NiceMatrixOptions{code-for-first-col = \color{red},code-for-first-row = \color{red}}

\usetikzlibrary{patterns}
\usepackage{booktabs}
\usepackage{multirow}
\usepackage{longtable}
\usepackage{changepage}

\usetikzlibrary{patterns}
\usetikzlibrary{arrows.meta}
\definecolor{cof}{RGB}{219,144,71}
\definecolor{pur}{RGB}{186,146,162}
\definecolor{greeo}{RGB}{91,173,69}
\definecolor{greet}{RGB}{52,111,72}


\newtheorem{theorem}{Theorem}[section]

\newtheorem{conjecture}[theorem]{Conjecture}
\theoremstyle{definition}
\newtheorem{definition}[theorem]{Definition}
\newtheorem{example}[theorem]{Example}

\theoremstyle{remark}
\newtheorem{remark}[theorem]{Remark}
\newtheorem{question}[theorem]{Question}

\numberwithin{equation}{theorem}

\title{Integer decomposition property of polytopes}

\author{Sharon Robins}
\address{Department of Mathematics, Simon Fraser University,
	8888 University Drive, Burnaby BC V5A1S6, Canada}
\email{srobins@sfu.ca}
\begin{document}
	\begin{abstract}
		We study the integer decomposition property of lattice polytopes associated to $n$-dimensional smooth complete fans with at most $n+3$ rays. Using the classification of smooth complete fans by Kleinschmidt and Batyrev and a reduction to lower dimensional polytopes, we prove the integer decomposition property for lattice polytopes in this setting.  
	\end{abstract}

	\maketitle

\section{Introduction}

A lattice polytope in $\mathbb{R}^n$ is the convex hull of a finite number of points in the lattice $\mathbb{Z}^n$. A lattice polytope $P$ is said to satisfy the \textbf{integer decomposition property} (or the \textbf{IDP} for short) if for all integers $k\geq1$ and every $x\in kP \cap \mathbb{Z}^n$ there exist $x_1,\ldots,x_k \in P\cap\mathbb{Z}^n$ such that $x=x_1+\cdots+x_k$. Following \cite{MR3679726}, we say a pair of lattice polytopes $(P,Q)$ has the \textbf{IDP} if
\[(P\cap \mathbb{Z}^n)+(Q\cap \mathbb{Z}^n)=(P+Q)\cap \mathbb{Z}^n,\]
where the addition is the Minkowski sum. We can see that $P$ has the IDP if and only if every pair $(P,kP)$ has the IDP for all integers $k\geq1$.

Since Oda \cite{oda2008problems} introduced the question of identifying the polytopes with the IDP, these polytopes have captured the attention of researchers in integer programming, graph theory, commutative algebra, and toric geometry. The IDP does not hold for any general pair of lattice polytopes $P$ and $Q$, even in the special case when $P=Q$ (see \cite[Example 2.2.11]{MR2810322}). However, we know that the IDP holds in the following situations: 
\begin{itemize}
	\item Let $R$ be a $n$-dimensional lattice polytope. Let $P=kdR,Q=ldR$ for $k,l\in \mathbb{N}$ and $d\geq n-1$  \cite{EW91} (see also \cite{LTZ93}, \cite[Theorem 1.3.3]{BGT97}).
	\item Let $R$ be a unimodular simplex or  parallelepiped or zonotope or a centrally symmetric 3-dimensional smooth polytope. Let $P=kR,Q=lR$ for $k,l\in \mathbb{N}$ \cite[Proposition 1.2, Theorem 1.4]{MR3962856}.
	\item Let $R$ be a lattice polytope of dimension $n$ and suppose every edge has lattice length $2n(n+1)$. Let $P=kR,Q=lR$ for $k,l\in \mathbb{N}$ \cite[Corollary 6]{MR3679726}. We should point out that this bound $2n(n+1)$ is an improvement over the prior one $4n(n+1)$ in \cite[Theorem 1.3]{Gubeladze_2012}. 
\end{itemize}

The above results suggest that if $P,Q$ are a big enough dilation of a lattice polytope $R$, then the pair $(P,Q)$ has the IDP. Apart from the above cases, the following conjecture has received special interest.

\begin{conjecture}[Oda’s Conjecture]{\cite[Problem 3]{oda2008problems}}\label{Oda'sconjecture}
	Let $P,Q\subseteq \mathbb{R}^n$ be two lattice polytopes. Suppose that $P$ is smooth and its normal fan refines the one of $Q$. Then the pair $(P,Q)$ has the IDP.
\end{conjecture}

Fakhruddin \cite{2002} and Ogata \cite{MR2267321} give two independent proofs of IDP, for the condition when $P$ is a smooth lattice polygon and the normal fan of $P$ is a refinement of the normal fan of $Q$. Even if we drop the assumption that $P$ is smooth, IDP holds \cite{MR2398828}. But if we drop the assumption on the normal fan, the IDP does not hold in general (see Figure \ref{fig:cone}). In Figure \ref{fig:fan}, we draw the normal fan of $P+Q$ and a minimal smooth refinement. We can see that it has 8 rays. This motivates our research question to study the lowest possible cases that fail to satisfy the IDP. Notice that in 2 dimensions one has a unique minimal smooth refinement, however in higher dimensions this is no longer true.

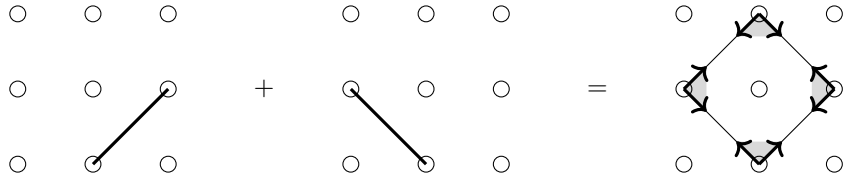
\begin{figure}[ht]
	\begin{subfigure}{.3\textwidth}
		\centering    
		\begin{tikzpicture}
			
			\foreach \x in {0,...,2}{       \foreach \y in {0,...,2}{  
					\draw (\x,\y) circle(3pt);
				}
			}
			\draw[very thick,-] (1,0)--(2,1); 
		\end{tikzpicture}
		
	\end{subfigure} \hfill
	+
	\begin{subfigure}{.3\textwidth}
		\centering    
		\begin{tikzpicture}
			
			\foreach \x in {0,...,2}{       \foreach \y in {0,...,2}{  
					\draw (\x,\y) circle(3pt);
				}
			}
			\draw[very thick,-] (1,0)--(0,1); 
		\end{tikzpicture}
	\end{subfigure} \hfill
	=
	\begin{subfigure}{.3\textwidth}
		\centering    
		\begin{tikzpicture}

			\fill[gray!30] (1,0)--(0.7,0.3)--(1.3,0.3);
			\draw[very thick,->] (1,0)--(0.7,0.3); 
			\draw[very thick,->] (1,0)--(1.3,0.3);

			\fill[gray!30] (2,1)--(1.7,0.7)--(1.7,1.3);
			\draw[very thick,->] (2,1)--(1.7,0.7); 
			\draw[very thick,->] (2,1)--(1.7,1.3); 
			
			\fill[gray!30] (1,2)--(1.3,1.7)--(0.7,1.7);
			\draw[very thick,->] (1,2)--(1.3,1.7); 
			\draw[very thick,->] (1,2)--(0.7,1.7);
			
			\fill[gray!30] (0,1)--(0.3,0.7)--(0.3,1.3);
			\draw[very thick,->] (0,1)--(0.3,0.7); 
			\draw[very thick,->] (0,1)--(0.3,1.3);
			
			\draw[-] (1,0)--(0,1); 
			\draw[-] (1,0)--(2,1); 
			\draw[-] (2,1)--(1,2); 
			\draw[-] (0,1)--(1,2); 
			
			\foreach \x in {0,...,2}{       \foreach \y in {0,...,2}{  
					\draw (\x,\y) circle(3pt);
				}
			}
		\end{tikzpicture}

	\end{subfigure}    
	\caption{ An example of polytopes that do not have IDP}
	\label{fig:cone}
\end{figure}


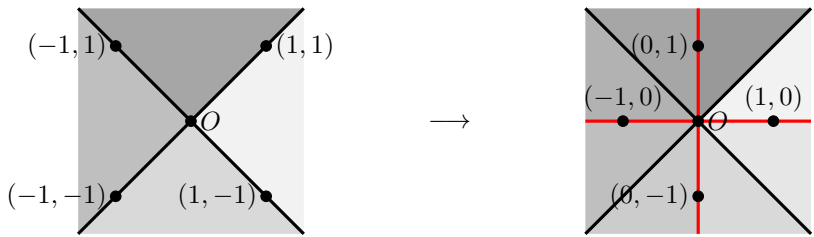
\begin{figure}[ht]
	\begin{subfigure}{.45\textwidth}
		\centering  
		\begin{tikzpicture}[scale=0.5]
			\coordinate (O) at (0,0);
			\coordinate (r1) at (2,2);
			\coordinate (R1) at (3,3);
			\coordinate (r2) at (2,-2);
			\coordinate (R2) at (3,-3);
			\coordinate (r3) at (-2,-2);
			\coordinate (R3) at (-3,-3);
			\coordinate (r4) at (-2,2);
			\coordinate (R4) at (-3,3);
			\coordinate (R5) at (3,0);
			\coordinate (R6) at (0,-3);
			\coordinate (R7) at (-3,0);
			\coordinate (R8) at (0,3);
			\coordinate (r5) at (2,0);
			\coordinate (r6) at (0,-2);
			\coordinate (r7) at (-2,0);
			\coordinate (r8) at (0,2);

			\fill[gray!10] (O)--(R1)--(R2);
			\fill[gray!30] (O)--(R2)--(R3);
			\fill[gray!50] (O)--(R3)--(R4);
			\fill[gray!70] (O)--(R4)--(R1);
			\draw[very thick,-] (O)--(R1);
			\draw[very thick,-] (O)--(R2);
			\draw[very thick,-] (O)--(R3);
			\draw[very thick,-] (O)--(R4);

			\filldraw[black] (O) circle (4pt) node[anchor=west] {$O$};
			\filldraw[black] (r1) circle (4pt) node[anchor=west] {$(1,1)$};
			\filldraw[black] (r2) circle (4pt) node[anchor=east] {$(1,-1)$};
			\filldraw[black] (r3) circle (4pt) node[anchor=east] {$(-1,-1)$};
			\filldraw[black] (r4) circle (4pt) node[anchor=east] {$(-1,1)$};
		\end{tikzpicture}
	\end{subfigure}
	\hfill
	$\longrightarrow$
	\begin{subfigure}{.45\textwidth}
		\centering  
		\begin{tikzpicture}[scale=0.5]
			
			\fill[gray!10] (O)--(R1)--(R5);
			\fill[gray!20] (O)--(R5)--(R2);
			\fill[gray!30] (O)--(R2)--(R6);
			\fill[gray!40] (O)--(R6)--(R3);
			\fill[gray!50] (O)--(R3)--(R7);
			\fill[gray!60] (O)--(R7)--(R4);
			\fill[gray!70] (O)--(R4)--(R8);
			\fill[gray!80] (O)--(R8)--(R1);
			
			\draw[red,very thick,-] (O)--(R5); 
			\draw[very thick,-]
			(O)--(R1);
			\draw[red,very thick,-] (O)--(R6);
			\draw[very thick,-] (O)--(R2);
			\draw[red,very thick,-] (O)--(R7);
			\draw[very thick,-] (O)--(R3);
			\draw[red,very thick,-] (O)--(R8);
			\draw[very thick,-] (O)--(R4);
			
			\filldraw[black] (r5) circle (4pt) node[anchor=south] {$(1,0)$};
			\filldraw[black] (r6) circle (4pt) node[anchor=north east] {$(0,-1)$};
			\filldraw[black] (r7) circle (4pt) node[anchor=south east] {$(-1,0)$};
			\filldraw[black] (r8) circle (4pt) node[anchor=south east] {$(0,1)$};
			
		\end{tikzpicture}
		
	\end{subfigure}

	\caption{Refining the normal fan into a smooth fan}
	\label{fig:fan}
\end{figure}

In general, from a given complete normal fan in $\mathbb{R}^n$ we can construct a simplicial fan by triangulating the maximal cones. By subdividing those cones further, we can construct a smooth complete fan with $n+k$ rays. We can then ask the following question:

\begin{question}\label{question1.3}
	Let $P,Q$ be two lattice polytopes and $\Sigma$ be the normal fan of $P+Q$. Suppose we refined $\Sigma$ to a minimal smooth complete fan $\Sigma'$. Can we find a bound on $k$ so that if $\Sigma'$ has only $n+k$ rays then the pair $(P,Q)$ always has the IDP?
\end{question}

Alternatively, we can start from a smooth complete fan $\Sigma$ in $\mathbb{R}^n$ with $n+k$ rays and study all lattice polytopes cut out by half spaces from all or some of these rays. All of these lattice polytopes can be studied in terms of convex support functions on $\Sigma$. See Section 2 for details. Hence, we can rephrase Question \ref{question1.3} as follows.

\begin{question}\label{question1.4}
	Let $\Sigma$ be a smooth complete fan in $\mathbb{R}^n$ with $n+k$ rays and let $P,Q$ be two lattice polytopes associated with two convex support functions for $\Sigma$. Does the pair $(P,Q)$ have the IDP?
\end{question}

The answer to Question \ref{question1.4} is known to be positive when $k=1$ or $2$, see Theorem \ref{IDP4}. In this paper, we will prove that the IDP holds for the case $k=3$. 

\begin{theorem}\label{IDPtheorem}
	If $P,Q$ are two lattice polytopes whose coarsest common refinement of normal fans has at most $n+3$ rays in $\mathbb{R}^n$, then the pair $(P,Q)$ has the IDP.
\end{theorem}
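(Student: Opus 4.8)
The plan is to translate the statement into toric geometry and to reduce the case of $n+3$ rays to the already known case of at most $n+2$ rays (Corollary~\ref{splfan}) by decomposing $P$ and $Q$ along the fibration structure of the ambient smooth fan — this is the ``reduction to lower dimensional polytopes'' advertised in the abstract. Fix a smooth complete fan $\Sigma$ in $\mathbb R^n$ with at most $n+3$ rays whose cones refine the normal fans of $P$ and $Q$; this exists by hypothesis. Let $X=X_\Sigma$ be the associated smooth complete toric variety. Since $\Sigma$ refines the normal fans of $P$ and $Q$, the corresponding torus-invariant divisors $D_P,D_Q$ are Cartier and globally generated on $X$, and $H^0(X,\mathcal O_X(D_P))$, $H^0(X,\mathcal O_X(D_Q))$ have character bases indexed by $P\cap\mathbb Z^n$ and $Q\cap\mathbb Z^n$; under this dictionary $(P,Q)$ has the IDP if and only if the multiplication map
\[
H^0\!\bigl(X,\mathcal O_X(D_P)\bigr)\otimes H^0\!\bigl(X,\mathcal O_X(D_Q)\bigr)\longrightarrow H^0\!\bigl(X,\mathcal O_X(D_P+D_Q)\bigr)
\]
is surjective. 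If $\Sigma$ has at most $n+2$ rays we are done by Corollary~\ref{splfan}, so it remains to treat the case of exactly $n+3$ rays.

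The structural input is the classification of smooth complete fans with few rays. Kleinschmidt's theorem describes those with $n+2$ rays as fans of totally split projective bundles over projective space; Batyrev's analysis of primitive collections describes those with $n+3$ rays. Up to lattice equivalence this classification exhibits $\Sigma$, in the principal cases, as a \emph{split toric fiber bundle}: there is a sublattice $N''\subseteq N=\mathbb Z^n$ with quotient $N'$ such that the projection $N\to N'$ is a map of fans from $\Sigma$ onto a fan $\Sigma'$ (the base), while the rays of $\Sigma$ lying in $N''$ span a complete fan $\Sigma''$ in $N''$ (the fiber). A ray count pins down the sizes: if $\Sigma'$ has $n'+k'$ and $\Sigma''$ has $n''+k''$ rays, with $n'+n''=n$, completeness forces $k',k''\ge 1$ and $k'+k''=3$, so $\{k',k''\}\subseteq\{1,2\}$ and both $\Sigma'$ and $\Sigma''$ fall under Corollary~\ref{splfan}. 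The remaining cases of the classification are either a product of two lower-dimensional fans — handled by the elementary fact that a Cartesian product of IDP pairs is an IDP pair — or one of a short explicit list of exceptional combinatorial types.

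The engine is the following \emph{fiber-bundle reduction}. When $\Sigma$ is a split toric fiber bundle as above, the lattice points of a polytope $P$ cut out by a convex support function on $\Sigma$ organize into a Cayley-type decomposition: writing $\mathbb Z^n=N'\oplus N''$ and letting $P_{\mathrm{fib}}\subseteq N''_{\mathbb R}$ be the image of $P$ under the projection to the fiber factor, one has for each lattice point $i\in P_{\mathrm{fib}}\cap N''$ a lattice polytope $P^{(i)}$ cut out by a convex support function on $\Sigma'$, with
\[
P\cap\mathbb Z^n \;=\; \bigsqcup_{\,i\in P_{\mathrm{fib}}\cap N''} \bigl(P^{(i)}\cap N'\bigr)\times\{i\}, \qquad (P+Q)^{(i+j)}=P^{(i)}+Q^{(j)}.
\]
Given $v=(v',v'')\in(P+Q)\cap\mathbb Z^n$, its fiber component $v''$ lies in $(P_{\mathrm{fib}}+Q_{\mathrm{fib}})\cap N''$; since $\Sigma''$ has at most $n+2$ rays, Corollary~\ref{splfan} gives $v''=i+j$ with $i\in P_{\mathrm{fib}}\cap N''$ and $j\in Q_{\mathrm{fib}}\cap N''$. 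This choice determines $P^{(i)}$ and $Q^{(j)}$, and the displayed identity shows $v'\in(P^{(i)}+Q^{(j)})\cap N'$; applying Corollary~\ref{splfan} once more to this pair on the base (again at most $n+2$ rays) we split $v'=a'+b'$, and then $v=(a',i)+(b',j)$ is the required decomposition. Thus the IDP for $(P,Q)$ follows from the IDP for the lower-dimensional base pairs $(P^{(i)},Q^{(j)})$ and the fiber pair $(P_{\mathrm{fib}},Q_{\mathrm{fib}})$, all instances of Corollary~\ref{splfan}.

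I expect the main obstacle to be twofold. First, the exceptional combinatorial types in Batyrev's classification that are neither split toric fiber bundles nor products must be dispatched one by one; the expectation is that each can be reduced — by an explicit coordinate change, or by contracting an extremal ray to a smooth fan with $n+2$ rays and controlling how lattice points and Minkowski sums behave under the contraction — to a configuration already handled, with the genuinely low-dimensional instances covered by the smooth-polygon results recalled in the introduction. Second, the Cayley-type decomposition above, and in particular the identity $(P+Q)^{(i+j)}=P^{(i)}+Q^{(j)}$, must be established rigorously from the split-fan structure: this is where one uses that the fibration comes from a sublattice $N''\subseteq N$ and that the defining support functions are convex, and it is the technical heart of the reduction. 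Everything else is bookkeeping with Corollary~\ref{splfan}.
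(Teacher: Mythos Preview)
Your overall architecture—project to a lower-dimensional polytope, decompose there, then decompose in each slice over a lattice point—is exactly the paper's. The gap is at the step you yourself flag as ``the technical heart.''

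The displayed identity $(P+Q)^{(i+j)}=P^{(i)}+Q^{(j)}$ is \emph{false} for a generic decomposition $v''=i+j$: one always has $P^{(i)}+Q^{(j)}\subseteq (P+Q)^{(i+j)}$, but the left side depends on the pair $(i,j)$ and equality typically fails. In the paper's coordinates the slice $\widetilde P(\beta_J)=P(F,\theta(\beta_J))$ is cut by $|S|+2$ half-spaces, and the two ``extra'' rows $u_1,y_1$ of $F$ compete: which one is binding flips as $\sum_{i\in X_3}\beta_i$ crosses $f$. (When $p_1=p_4=1$ both rows equal $(-1,\dots,-1)$, the slice is a simplex of size $\min(\theta_{u_1},\theta_{y_1})$, and the slice-sum identity becomes $\min(a,b)+\min(c,d)=\min(a+c,b+d)$, which holds only when $a-b$ and $c-d$ have the same sign.) The paper's proof is precisely the verification that a \emph{compatible} choice of $(\beta_J,\gamma_J)$ always exists: split into the subcases $\sum_{j\in X_3}\alpha_j\le f+f'$ and $\ge f+f'$, pick $\beta_J,\gamma_J$ on the matching side of $f,f'$, and then—in four cases according to whether $p_1,p_4$ equal $1$ or exceed $1$—rewrite the slices as $P(\widetilde F,\widetilde\theta)$, $P(\widetilde F,\widetilde\theta')$ for a splitting fan $\widetilde\Sigma$ with $\widetilde\theta,\widetilde\theta'$ convex, so that \eqref{sumpolytopes} and Theorem~\ref{IDP4} apply. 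This coordination of the splitting $(i,j)$ with $\alpha$ is the actual content; it does not follow from split-bundle structure plus convexity of the original support functions.

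Your reading of Batyrev's classification is also off. There is no residual ``short explicit list of exceptional combinatorial types'': with $n+3$ rays $\Sigma$ has either three primitive collections (a splitting fan, already covered by Theorem~\ref{IDP4}) or five, in which case it lies in the single infinite family of Theorem~\ref{Batyrev}. That family \emph{is} the main case; it admits the block form $A=\bigl(\begin{smallmatrix}F&G\\0&H\end{smallmatrix}\bigr)$ and hence a projection to a simplex fan, but it is not a split toric fiber bundle in your sense—the normal fan of the slice $\widetilde P(\beta_J)$ genuinely changes with $\beta_J$—which is exactly why the slice identity cannot be had for free.
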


It is worth noting that normal fan of a smooth lattice polytope in $\mathbb{R}^n$ with $n+k$ facets is a smooth complete fan in $\mathbb{R}^n$ with $n+k$ rays. Hence, as a  corollary to Theorem \ref{IDPtheorem}, we observe that Conjecture \ref{Oda'sconjecture} is true if $P$ is a smooth lattice polytope in $\mathbb{R}^n$ with at most $n+3$ facets. In particular, this implies that every smooth lattice polytope with at most $n+3$ facets has the IDP. An application of the result can be seen in \cite[Theorem 2.1]{Rob22}.

On the other hand, one can ask what the answer for Question 1.3 is in the particular situation where $n=2$. Our result shows that as long as $\Sigma$ has 5 or fewer rays IDP holds. However, the example in Figure \ref{fig:cone} shows that there exists a smooth complete fan in $\mathbb{R}^2$ with 8 rays which has a negative answer to Question \ref{question1.4}. We ask the following question:

\begin{question}
	Let $P, Q$ be two lattice polygons and suppose that the normal fan of $P+Q$ can be refined to a smooth fan in $\mathbb{R}^2$ of 6 or 7 rays. Does the pair $(P, Q)$ have the IDP? 
\end{question}

We believe the answer is yes, but do not have a proof. We intend to address this question in future research.

This paper is organized as follows. In Section 2, we give the preliminary details about fans and lattice polytopes. We also recall the classification of smooth complete fans in $\mathbb{R}^n$ with $n+3$ rays. We prove the Theorem \ref{IDPtheorem} in Section 3. 

\subsection*{Acknowledgements}
I am thankful to Nathan Ilten for a number of helpful conversations, as well as comments on preliminary versions of this paper. I would also like to express my gratitude to the anonymous referee for their insightful comments.

\section{Preliminaries}
In this section, we fix notation and recall some basic facts about fans. For details see {\cite[Chapter 3]{MR2810322}}. We also recall the classification of smooth complete fans in $\mathbb{R}^n$ with $n+3$ rays by Batyrev \cite{batyrev1991}.

All cones in this work are assumed to be rational polyhedral, i.e., subsets of $\mathbb{R}^n$ of the form $\sigma=\{\sum_{i=1}^{m}\lambda_iu_i: \lambda_i\geq0\}$ for some $u_1,\ldots,u_m\in \mathbb{Z}^n$. A fan $\Sigma$ in $\mathbb{R}^n$ is said to be \textbf{complete} if the union of all cones in $\Sigma$ is $\mathbb{R}^n$. Moreover, $\Sigma$ is said to be \textbf{smooth} if every cone $\sigma \in \Sigma$ is generated by part of a basis of $\mathbb{Z}^n$. Let $\Sigma$ be a smooth complete fan in $\mathbb{R}^n$ with $n+m$ rays. We denote the rays of $\Sigma$ by $\Sigma(1)$. Given a ray $\rho \in \Sigma(1)$, we denote by $u_{\rho}$ the \textbf{primitive generator} in $\mathbb{Z}^n$ of $\rho$. 

Two fans $\Sigma_1$ and $\Sigma_2$ are said to be \textbf{unimodular equivalent} if there exists a unimodular transformation $L:\mathbb{R}^n \rightarrow \mathbb{R}^n$ that preserves $\mathbb{Z}^n$ and maps the cones of $\Sigma_1$ bijectively to the cones of $\Sigma_2$. The unimodularity condition is motivated by algebraic geometry: the toric varieties associated with the unimodularly equivalent fans are isomorphic. We will consider fans up to unimodular equivalence. For every smooth complete fan, there is a unimodularly equivalent fan with one of the maximal cones generated by the standard basis $\{e_1,\ldots,e_n\}$ of $\mathbb{R}^n$. 

Let $\Sigma$ be a smooth complete fan in $\mathbb{R}^n$ with $n+m$ rays. Let $A$ be the $(n+m)\times n$ matrix whose rows represent the primitive ray generators of $\Sigma$ in terms of the standard basis $\{e_1,\ldots,e_n\}$ of $\mathbb{R}^n$. If necessary by rearranging the ordering of rays, $A$ has the form $\left[   
\begin{array}{c}
	I      \\
	\hline
	B
\end{array}
\right]$
where $I$ is the identity matrix and $B$ is an integer matrix. We denote by $\{f_1,\ldots,f_{n+m}\}$ the standard basis of $\mathbb{R}^{n+m}$. If we are not specifying the ordering of the rays, then we alternatively denote the standard basis of $\mathbb{R}^{n+m}$ by $\{f_{\rho}:\rho\in \Sigma(1)\}$. Then it is easy to see that $\{Ae_1,\ldots,Ae_n\}$ can be extended to a $\mathbb{Z}$-basis of $\mathbb{Z}^{n+m}$ by adding the vectors $f_{n+1},\ldots,f_{n+m}$. Therefore, we can identify $\mathbb{Z}^{m+n}/A\mathbb{Z}^n$ with $\mathbb{Z}^m$, and we get the following short exact sequence:

\begin{equation}\label{exactseq}
	0{\longrightarrow} \mathbb{Z}^n \overset{A}\longrightarrow \mathbb{Z}^{n+m} \longrightarrow \mathbb{Z}^{m} \rightarrow 0.
\end{equation}

To any $h\in \mathbb{Z}^{n+m}$, we can associate a (possibly empty) polytope $P(A,h) \subseteq \mathbb{R}^n$ defined by 
\[P(A,h)=\{ x \in \mathbb{R}^n:Ax\geq -h\}.\]
If $h-h'=Ax$ for some $x\in \mathbb{Z}^n$, then $P(A,h)$ is a translation of $P(A,h')$ by an integral vector. Since translations by integral vectors preserve the IDP, it is enough to study the polytopes up to the image of $A\mathbb{Z}^n$.

A \textbf{support function} on $\Sigma$ is a function $\varphi:\mathbb{R}^n \rightarrow \mathbb{R} $ that is linear on each cone of $\Sigma$ and is $\mathbb{Z}$-valued on $\mathbb{Z}^n$. To any $h\in \mathbb{Z}^{n+m}$, we define a function $\varphi_{h}$ such that
\begin{equation}\label{support}
	\varphi_{h}(u_{\rho})=-h_{\rho} \text{ for all } \rho\in \Sigma(1),
\end{equation}
where $h_{\rho}$ is the $\rho^{\text{th}}$
coordinate of $h$. Since $\Sigma$ is smooth, $\varphi_h$ can be extended uniquely to a support function of $\Sigma$. We say $\varphi_h$ is convex if 
\[\varphi(x+y)\geq \varphi(x)+\varphi(y)\]
for all $x,y\in \mathbf{R}^n$. Moreover, we say $\varphi_h$ is strictly convex if 
\[\varphi(x+y)> \varphi(x)+\varphi(y)\] 
for all $x,y\in \mathbf{R}^n$ not lying on the same maximal cone of $\Sigma$. If $h\in \mathbb{Z}^{n+m}$ defines a convex support function $\varphi_h$, then $P(A,h)$ is a lattice polytope. Conversely, if $Q$ is a lattice polytope whose normal fan can be refined into a smooth complete fan $\Sigma$, then there exists a convex support function $\varphi_h$ on $\Sigma$ such that $P(A,h)$ is equal to $Q$. If $\varphi_h,\varphi_{h'}$ are two convex support functions on $\Sigma$, then
\begin{equation}\label{sumpolytopes}
	P(A,h+h')=P(A,h)+P(A,h'),
\end{equation}    
where the addition of polytopes is the Minkowski sum.

In terms of support functions, we can state  Conjecture \ref{Oda'sconjecture} as follows.

\begin{conjecture}
	Let $\Sigma$ be a smooth projective fan, and $A$ be the matrix representation of $\Sigma$. Suppose that $\varphi_h$ is strictly convex, and $\varphi_{h'}$ is convex on $\Sigma$. Then the pair $(P(A,h),P(A,h'))$ has the IDP.
\end{conjecture}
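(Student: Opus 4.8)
The plan is to argue by induction on the dimension $n$: Batyrev's classification pins down the shape of an admissible fan $\Sigma$, and in each shape one finds a lattice projection presenting $P=P(A,h)$ and $Q=P(A,h')$ as bundles of dilated unimodular simplices over lower-dimensional polytopes, to which the inductive hypothesis or the known cases $k\le 2$ apply. First the reductions. An integral translation does not affect IDP, and by \eqref{sumpolytopes} the claim for a pair of convex support functions $\varphi_h,\varphi_{h'}$ on a smooth complete fan $\Sigma$ is precisely that every lattice point of $P(A,h+h')$ is a sum of a lattice point of $P(A,h)$ and one of $P(A,h')$; the reverse inclusion is automatic. If $\Sigma$ has at most $n+2$ rays we are done by Corollary \ref{splfan}, so I assume it has exactly $n+3$. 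When $n\le 2$ this forces $n=2$ with five rays, a ``pentagonal'' fan, and $(P,Q)$ is then a pair of lattice polygons with $P+Q$ supported on that fan; this base case follows from the planar results of Fakhruddin \cite{2002} and Ogata \cite{MR2267321} (together with \cite{MR2398828}) after matching their hypotheses. So from now on $n\ge 3$.

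The engine is a reduction lemma. Suppose $\phi\colon\mathbb Z^n\twoheadrightarrow\mathbb Z^{m}$ is a lattice surjection carrying $\Sigma$ onto a smooth complete fan $\overline\Sigma$ such that, for every convex support function $\varphi_g$ on $\Sigma$, the map $\phi$ sends $P(A,g)$ onto the lattice polytope $\overline P(g)$ of the induced convex support function on $\overline\Sigma$, and every fibre of $P(A,g)\to\overline P(g)$ is a lattice translate of $\ell_g(\overline x)\,\Delta$ for one fixed unimodular simplex $\Delta$, where the dilation factor $\ell_g\ge 0$ and the translation depend affinely on $\overline x$ with linear part determined by $(\Sigma,\phi)$ alone. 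Then IDP for all pairs of convex support functions on $\overline\Sigma$ implies IDP for all pairs on $\Sigma$. Indeed, given $p\in P(A,h+h')\cap\mathbb Z^n$, IDP downstairs lets us write $\phi(p)=\overline x_1+\overline x_2$ with $\overline x_1\in\overline P(h)\cap\mathbb Z^{m}$ and $\overline x_2\in\overline P(h')\cap\mathbb Z^{m}$; since the linear parts of the dilation and translation data are fan-determined, the fibre of $P(A,h+h')$ over $\phi(p)$ does not depend on the chosen splitting and equals the Minkowski sum of the fibre of $P(A,h)$ over $\overline x_1$ and that of $P(A,h')$ over $\overline x_2$; these being lattice translates of dilated unimodular simplices, their Minkowski sum has IDP, which splits $p$. (This is the computation that already handles the cases $k\le 2$.)

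By Batyrev's classification \cite{batyrev1991}, a smooth complete fan with $n+3$ rays is, up to lattice isomorphism, either a splitting fan — whose toric variety is an iterated projective bundle — or one of the fans with five primitive collections, with rays grouped into cyclically arranged blocks $V_0,\dots,V_4$. In the splitting case, the projection forgetting the fibre coordinates of the outermost $\mathbb P^{r}$-factor ($r\ge 1$) satisfies the reduction lemma with $\overline\Sigma$ of dimension $n-r<n$, Picard rank $\le 2$, and $(n-r)+2$ rays; the affine fibre-size $\ell_g$ is read off the ray matrix, its linear part built from the bundle's twisting data, which are fan data, and one concludes by Corollary \ref{splfan}. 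In the five-collection case, since $n\ge 3$ some block $V_i$ has at least two rays; reading off Batyrev's explicit ray matrix, collapsing $V_i$ by one ray gives a lattice surjection $\mathbb Z^n\to\mathbb Z^{n-1}$ onto a smooth complete fan of dimension $n-1$ with $n+2=(n-1)+3$ rays, realizing $P(A,g)$ as a bundle of segments $[0,\ell_g(\overline x)]$ with $\ell_g$ affine and fan-linear; the reduction lemma then brings the step down to dimension $n-1$, where the inductive hypothesis finishes.

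I expect the five-collection case to be the main obstacle: one must check, directly from Batyrev's parametrization, that the collapse projection genuinely presents $P(A,g)$ as a segment bundle whose fibre length is affine in $\overline x$ with linear part independent of $g$ — which amounts to tracking how the primitive relations of the five collections transform under the projection and handling the bookkeeping for small blocks — together with the milder points that the support function induced on $\overline\Sigma$ stays convex and that $\phi$ carries the nef polytope onto the expected one. A secondary loose end is to confirm that the planar base case is indeed subsumed by the cited polygon results, which are stated in terms of one polytope's normal fan refining the other's rather than via a common smooth refinement.
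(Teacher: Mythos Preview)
First, a scope issue: the statement you were given is Oda's Conjecture restated in the language of support functions, and the paper does \emph{not} prove it --- it is left open. Your argument immediately restricts to fans with at most $n+3$ rays (that is all Batyrev's classification covers), so at best you are attempting Theorem~\ref{IDPtheorem}, not the conjecture; nothing in your outline touches fans with $n+4$ or more rays.

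Even read as an attempt at Theorem~\ref{IDPtheorem}, your reduction lemma breaks in the five-collection case. You need the fibre of the one-ray collapse to be a segment whose length is affine in the base point with fan-determined linear part. But every ray of a five-collection fan lies in \emph{two} primitive collections, so the corresponding coordinate is bounded on one side by two competing half-spaces: for instance a coordinate from the $X_0$-block is bounded above by both the $u_1$-row and the $y_1$-row of $A$, and the fibre length is the minimum of two affine functions, not one. (Collapsing a ray from $X_1$ or $X_4$ does give an affine fibre when $p_1\ge 2$ or $p_4\ge 2$, but your induction eventually reaches $p_1=p_4=1$, where only $X_0,X_2,X_3$ can supply a second ray, and the same obstruction appears there.) Consequently the fibre of $P(A,h+h')$ over $\phi(p)$ genuinely depends on how you split $\phi(p)=\overline x_1+\overline x_2$, contrary to what your lemma asserts. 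The paper's proof of Theorem~\ref{IDPtheorem} confronts exactly this: it projects onto the $(t,z)$-coordinates, observes that one of the two extra half-spaces cutting the fibre (from $u_1$ or $y_1$) is always redundant, determines \emph{which} by comparing $\sum_{j\in X_3}\alpha_j$ with the threshold $f+f'$, and then chooses $\beta_J,\gamma_J$ on the matching side of that threshold so that the same constraint is redundant in all three fibres and they Minkowski-sum correctly. That threshold-dependent choice, together with the four subcases on $(p_1,p_4)$, is precisely the content your lemma assumes away. A further loose end: your induction hands down a pair of merely convex (not strictly convex) support functions, so the polygon theorems you cite for the $n=2$ base case --- which require one normal fan to refine the other --- do not apply to the reduced problem.
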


Now we will discuss the definition of primitive collections introduced by Batyrev, which makes the classifications and computations easier.

\begin{definition}\label{primitive}
	Let $\Sigma$ be a fan.
	A subset $\mathscr{P}=\{\rho_1,\rho_2,\ldots,\rho_k\} \subset \Sigma(1)$ is called a primitive collection if $\mathscr{P}$ is not contained in a single cone of $\Sigma$, but every proper subset is. Let $\mathscr{P}$ be a primitive collection and $\sigma\in\Sigma$ be the cone of the smallest dimension containing $u_{\rho_1}+\cdots+u_{\rho_k}$. Then there exists a unique expression
	\begin{equation}\label{primitiverel}
		u_{\rho_1}+\cdots+u_{\rho_k}=\sum_{\rho\in \sigma(1)}c_{\rho}u_{\rho}, \text{~~~}c_{\rho}\in\mathbb{Z}_{>0}. 
	\end{equation}
	The equation (\ref{primitiverel}) is called the primitive relation of $\mathscr{P}$. 
\end{definition}

\begin{definition}
	A fan is called a splitting fan if there is no intersection between any two primitive collections.
\end{definition}

It is easy to check whether $\varphi_h$ is convex or not using primitive collections.

\begin{theorem}[{\cite[Theorem 2.15]{batyrev1991}}]\label{theoremconevex}
	Let $\Sigma$ be a smooth projective fan in $\mathbb{R}^n$ and $h\in \mathbb{Z}^{(n+m)}$. Then
	$\varphi_h$ is convex if and only if it satisfies
	\[\varphi_{h}(u_{\rho_{1}}+\cdots+u_{\rho_{k}})\geq \varphi_{h}(u_{\rho_{1}})+\cdots+\varphi_{h}(u_{\rho_{k}})\]
	for all primitive collections $\mathscr{P}=\{\rho_1,\ldots,\rho_k\}$ of $\Sigma$. Similarly, $\varphi_h$ is strictly convex 
	if and only if it satisfies
	\[\varphi_{h}(u_{\rho_{1}}+\cdots+u_{\rho_{k}})> \varphi_{h}(u_{\rho_{1}})+\cdots+\varphi_{h}(u_{\rho_{k}})\]
	for all primitive collections $\mathscr{P}=\{\rho_1,\ldots,\rho_k\}$ of $\Sigma$.
\end{theorem}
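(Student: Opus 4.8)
The plan is to pass through the standard local description of convexity of a support function and then match it, term by term, against the primitive relations. Since $\Sigma$ is smooth, every maximal cone $\sigma$ is generated by a $\mathbb{Z}$-basis of $\mathbb{Z}^n$, so there is a unique $m_\sigma\in\mathbb{Z}^n$ with $\langle m_\sigma,u_\rho\rangle=\varphi_h(u_\rho)=-h_\rho$ for all $\rho\in\sigma(1)$, and $\varphi_h$ agrees with the linear functional $\langle m_\sigma,-\rangle$ on $\sigma$. First I would recall the elementary reformulation (this is exactly how convexity of a support function is checked in \cite[Ch.~6]{MR2810322}), namely that convexity of $\varphi_h$ is equivalent to superadditivity and hence to
\[\varphi_h\text{ is convex}\ \Longleftrightarrow\ \varphi_h(u)\le\langle m_\sigma,u\rangle\ \text{for every maximal cone }\sigma\text{ and every }u\in\mathbb{R}^n,\]
while $\varphi_h$ is strictly convex iff the inequality is strict whenever $u\notin\sigma$. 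Because $\varphi_h$ and $\langle m_\sigma,-\rangle$ are both linear on each cone and each cone is generated by its rays, the right-hand condition reduces to the finite list of inequalities $\varphi_h(u_\rho)\le\langle m_\sigma,u_\rho\rangle$ ranging over all maximal cones $\sigma$ and all rays $\rho\in\Sigma(1)$ (an equality when $\rho\in\sigma(1)$; strict, for the strictly convex statement, when $\rho\notin\sigma(1)$). So the task becomes: show this finite list is equivalent to the list of primitive-collection inequalities.

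The forward direction is immediate. Superadditivity $\varphi_h(x)+\varphi_h(y)\le\varphi_h(x+y)$ iterated gives $\varphi_h(u_{\rho_1})+\cdots+\varphi_h(u_{\rho_k})\le\varphi_h(u_{\rho_1}+\cdots+u_{\rho_k})$ for every primitive collection. For strict convexity one checks in addition that, for a primitive collection $\mathscr{P}=\{\rho_1,\ldots,\rho_k\}$, the partial sum $w=u_{\rho_1}+\cdots+u_{\rho_{k-1}}$ and the ray generator $u_{\rho_k}$ cannot lie in a common cone of $\Sigma$: otherwise, since a ray contained in a cone is a face of it and $w$ then lies in the face $\sigma\cap\sigma'$ shared by a cone $\sigma$ containing $\{\rho_1,\dots,\rho_{k-1}\}$ and a cone $\sigma'$ containing $w$ and $u_{\rho_k}$, all of $\mathscr{P}$ would lie in $\sigma'$ — so the single superadditivity step $\varphi_h(w+u_{\rho_k})\ge\varphi_h(w)+\varphi_h(u_{\rho_k})$ is strict.

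For the converse I would argue by induction. Fix a maximal cone $\sigma$ and a ray $\rho\notin\sigma(1)$; we want $\varphi_h(u_\rho)\le\langle m_\sigma,u_\rho\rangle$. The set $\sigma(1)\cup\{\rho\}$ lies in no cone of $\Sigma$ (a cone containing $\sigma(1)$ contains $\sigma$, hence equals $\sigma$ by maximality, but $\rho\notin\sigma$), so it contains a primitive collection, and that primitive collection must involve $\rho$ since every subset of $\sigma(1)$ lies in $\sigma$; write it $\mathscr{P}=\{\rho\}\cup S$ with $S\subseteq\sigma(1)$. Let $\tau$ be the minimal cone containing $v:=u_\rho+\sum_{\rho'\in S}u_{\rho'}$ and $v=\sum_{\rho''\in\tau(1)}c_{\rho''}u_{\rho''}$ its primitive relation. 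Using linearity of $\langle m_\sigma,-\rangle$, linearity of $\varphi_h$ on $\sigma$ (so $\langle m_\sigma,u_{\rho'}\rangle=\varphi_h(u_{\rho'})$ for $\rho'\in S$) and on $\tau$ (so $\varphi_h(v)=\sum_{\rho''\in\tau(1)}c_{\rho''}\varphi_h(u_{\rho''})$), together with the primitive-collection inequality $\varphi_h(u_\rho)+\sum_{\rho'\in S}\varphi_h(u_{\rho'})\le\varphi_h(v)$, one obtains
\[\langle m_\sigma,u_\rho\rangle-\varphi_h(u_\rho)\ \ge\ \langle m_\sigma,v\rangle-\varphi_h(v)\ =\ \sum_{\rho''\in\tau(1)}c_{\rho''}\bigl(\langle m_\sigma,u_{\rho''}\rangle-\varphi_h(u_{\rho''})\bigr).\]
Since all $c_{\rho''}>0$, the wanted inequality for $(\sigma,\rho)$ follows from the same inequality for the pairs $(\sigma,\rho'')$, $\rho''\in\tau(1)$, with base case $\tau=\{0\}$ (empty primitive relation), which is precisely one of the hypothesized primitive-collection inequalities. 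The main obstacle is to make this recursion well-founded: one must organize it as a genuine induction — in \cite{MR2810322} by induction on the dimension of the cone $\tau$ occurring in the primitive relation — and verify that the cones arising in the recursion really do get simpler, which is exactly where the minimality built into the definition of a primitive collection is used. Finally, for strict convexity the same display shows that strictness of the primitive-collection inequality for $\mathscr{P}$ makes the first "$\ge$" strict while the remaining summands stay $\ge0$, giving $\langle m_\sigma,u_\rho\rangle>\varphi_h(u_\rho)$ and completing the equivalence.
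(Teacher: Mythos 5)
The paper does not prove this statement: it is quoted from \cite{MR2810322} as Theorem 6.4.9, so there is no internal proof to compare against. I can therefore only assess your argument on its own merits.

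Your reformulation of (strict) convexity via the linear functionals $m_\sigma$, the reduction to the finite list of ray inequalities $\varphi_h(u_\rho)\le\langle m_\sigma,u_\rho\rangle$, and the forward implication are all sound. The converse, however, has a genuine gap, and it is exactly the one you flag but do not close. Having formed $\mathscr{P}=\{\rho\}\cup S$ with $S\subseteq\sigma(1)$ and the cone $\tau$ of its primitive relation, you reduce $g(\rho):=\langle m_\sigma,u_\rho\rangle-\varphi_h(u_\rho)\ge 0$ to the corresponding inequality for the rays $\rho''\in\tau(1)\setminus\sigma(1)$, and then assert that an induction on $\dim\tau$ makes this recursion well-founded. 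That is not so: the cone attached to the next primitive collection $\{\rho''\}\cup S''$ can have larger dimension, and nothing in the definition of a primitive collection forbids a cycle $\rho\leadsto\rho''\leadsto\rho$ in which the same rays keep reappearing. Nor does a ``take the minimizer'' argument rescue it: the inequality your recursion produces, $g(\rho)\ge\sum_{\rho''\in\tau(1)\setminus\sigma(1)}c_{\rho''}\,g(\rho'')$, is satisfied by the constant value $g\equiv -1$ on $\Sigma(1)\setminus\sigma(1)$ whenever $\sum_{\rho''\notin\sigma(1)}c_{\rho''}\ge 1$, so these inequalities alone cannot force $g\ge 0$. The missing step is the substantive content of the theorem, namely Batyrev's description of the Mori cone by primitive relations; in \cite{MR2810322} the converse is obtained through the Toric Cone Theorem and the wall/extremal-contraction machinery, not a direct recursion on the cones $\tau$. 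As written, your sketch leaves the converse direction unproved.
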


We also point out that these results hold in a broader context (see {\cite[Theorem 1.4]{CvR09}} and  {\cite[Theorem 6.4.9]{MR2810322}}). For the rest of this section we discuss the classification of smooth complete fans in $\mathbb{R}^n$ with at most $n+3$ rays.

\begin{example}\label{rank1}
	Let $\Sigma$ be a smooth complete fan in $\mathbb{R}^n$ with $n+1$ rays. It is easy to see that there is only one unimodular equivalent class. The ray generators of $\Sigma$ can be written in the matrix $A$ as follows:
	\NiceMatrixOptions{code-for-first-col = \color{red},code-for-first-row = \color{red}}
	\newcommand{\blue}{\color{blue}}
	\newcommand{\green}{\color{teal}}
	\newcommand{\magenta}{\color{magenta}}
	\setcounter{MaxMatrixCols}{20}
	
	\begin{equation}
		A=
		\begin{pNiceMatrix}[small,first-row,first-col,margin,hvlines]
			& & \Ldots[line-style={solid,<->},shorten=0pt]^{n \text{ columns}} \\
			\rho_1& \Block{3-3}<\LARGE>{\blue I} \\
			\vdots&&&\\ 
			\rho_n&&\\
			\rho_{n+1}&\blue -1 & \blue \Cdots&\blue -1\\
		\end{pNiceMatrix}
		\label{matrix}.
	\end{equation}
	\\
	It has only one primitive collection that consists of all rays and the primitive relation is given by
	\[u_{\rho_1}+\cdots+u_{\rho_{n+1}}=0.\]
	The support function $\varphi_h$ is convex if and only if 
	\[\varphi_h(u_{\rho_1}+\cdots+u_{\rho_{n+1}})\geq \varphi_h(u_{\rho_1})+\cdots+\varphi_h(u_{\rho_{n+1}}).\]
	Simplifying the inequality, we get
	\[\varphi_h(0)\geq -\sum_{\rho\in \Sigma(1)} h_{\rho},\]
	i.e., $h$ is convex support function if and only if $\sum_{\rho \in \Sigma(1)} h_{\rho} \geq 0$.
\end{example}

Kleinschmidt classified smooth complete $n$-dimensional fans with $n+2$ rays. It is known that a smooth complete fan in $\mathbb{R}^n$ of at most $n+2$ rays is a splitting fan.

\begin{theorem}[{\cite[Theorem 1]{Kleinschmidt1988}}]\label{rank2}
	Let $\Sigma$ be a smooth complete fan $\mathbb{R}^n$ with $n+2$ rays. In our setting, the generators of the smooth complete fan in $\mathbb{R}^n$ with $n+2$ rays can be written in the matrix $A$ as follows:

	\NiceMatrixOptions{code-for-first-col = \color{red},code-for-first-row = \color{red}}
	\newcommand{\blue}{\color{blue}}
	\newcommand{\green}{\color{green}}
	\newcommand{\magenta}{\color{magenta}}
	\setcounter{MaxMatrixCols}{20}
	
	\begin{equation}
		A=
		\begin{pNiceMatrix}[small,first-row,first-col,margin,hvlines]
			& & \Ldots[line-style={solid,<->},shorten=0pt]^{n \text{ columns}} \\
			\rho_1& \Block{3-3}<\LARGE>{\blue I} & & &\Block{3-3}<\LARGE>{ \blue 0}&&\\
			\vdots&&&&&&\\ 
			\rho_k&&&&&&\\
			\rho_{k+1}& \Block{3-3}<\LARGE>{\blue 0} & & &\Block{3-3}<\LARGE>{ \blue I}&&\\
			\vdots&&&&&&\\ 
			\rho_n&&&&&&\\
			\rho_{n+1}&\blue -1& \blue \Cdots&\blue -1&\blue 0&\blue \Cdots&\blue 0\\
			\rho_{n+2}&\blue a_1& \blue \Cdots&\blue a_k&\blue -1&\blue \Cdots&\blue -1\\
		\end{pNiceMatrix}
		\label{matrix2}
	\end{equation}
	\\
	where $1\leq k\leq n-1$ and $a_k\leq \cdots \leq a_1\leq0$. The primitive collections are given by $\{\rho_1,\ldots,\rho_k,\rho_{n+1}\}$ and $\{\rho_{k+1},\ldots,\rho_n,\rho_{n+2}\}$.
\end{theorem}

\begin{theorem}[{\cite[Corollary 4.2]{MR2551605}}]\label{IDP4}
	If $\Sigma$ is a smooth complete splitting fan and $\varphi_h,\varphi_{h'}$ are convex support functions of $\Sigma$, then the pair $(P(A,h),P(A,h'))$ has the IDP. In particular, if $\Sigma$ is a smooth complete fan in $\mathbb{R}^n$
	with at most $n+2$ rays then the statement holds.
\end{theorem}

Batyrev classified smooth complete $n$-dimensional fans with $n+3$ rays in terms of primitive collections. Batyrev showed that the number of primitive collections of its rays is 3 or 5 \cite[Theorem 5.7]{batyrev1991}. The case of 3 primitive collections is a splitting fan. The case of 5 primitive collections is as in the following theorem.

\begin{figure}
	\centering
	\begin{tikzpicture}[scale=3]
		\node[draw, thick, black, rotate=0, minimum size=3cm, regular polygon, regular polygon sides=5] (pol) at (0,0) {}; 
		
		\filldraw[black] (pol.corner 1) circle (0.5pt) node[anchor=south, outer sep=5pt] {$X_0$};
		\filldraw[black] (pol.corner 2) circle (0.5pt) node[anchor=east, outer sep=5pt] {$X_1$};
		\filldraw[black] (pol.corner 3) circle (0.5pt) node[anchor=east, outer sep=5pt] {$X_2$};
		\filldraw[black] (pol.corner 4) circle (0.5pt) node[anchor=west, outer sep=5pt] {$X_3$};
		\filldraw[black] (pol.corner 5) circle (0.5pt) node[anchor=west, outer sep=5pt] {$X_4$};
	\end{tikzpicture}
	\caption{}
	\label{fig:pent}
\end{figure}

\begin{theorem}[{\cite[Theorem 6.6]{batyrev1991}}]\label{Batyrev}
	Let $\mathscr{X}_{\alpha}=X_{\alpha}\cup X_{\alpha+1}$, where $\alpha \in \mathbb{Z}/{5\mathbb{Z}}$,
	\begin{align*}
		X_0&=\{v_1,\ldots,v_{p_0}\},    &   X_1&=\{y_1,\ldots,y_{p_1}\}, &     X_2=\{z_1,\ldots,z_{p_2}\},\\
		X_3&=\{t_1,\ldots,t_{p_3}\},    & X_4&=\{u_1,\ldots,u_{p_4}\},
	\end{align*}
	and $p_0+p_1+p_2+p_3+p_4=n+3$. It is convenient to use a picture of a pentagon with the vertices $\iota e^{\iota 2\pi  \alpha /5}$ in the complex plane (see Figure \ref{fig:pent}). Then any smooth complete $n$-dimensional fan $\Sigma$ with the set of generators $\bigcup X_{\alpha}$ and five primitive collections $\mathscr{X}_{\alpha}$ can be described up to a symmetry of the pentagon by the following primitive relations with nonnegative integral coefficients $c_2,\ldots,c_{p_2},b_1,\ldots,b_{p_3}$:
	\begin{equation*}
		\begin{split}
			v_1+\cdots +v_{p_0}+y_1+\cdots+y_{p_1}&=c_2z_2+\cdots+c_{p_2}z_{p_2}+(b_1+1)t_1+\cdots+(b_{p_3}+1)t_{p_3},\\
			y_1+\cdots+y_{p_1}+z_1+\cdots+z_{p_2}&=u_1+\cdots+u_{p_4},\\
			z_1+\cdots+z_{p_2}+t_1+\cdots+t_{p_3}&=0,\\
			t_1+\cdots+t_{p_3}+u_1+\cdots+u_{p_4}&=y_1+\cdots+y_{p_1},\\
			u_1+\cdots+u_{p_4}+v_1+\cdots +v_{p_0}&=c_2z_2+\cdots+c_{p_2}z_{p_2}+b_1t_1+\cdots+b_{p_3}t_{p_3}.
		\end{split}
	\end{equation*}
\end{theorem}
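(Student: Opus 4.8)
The plan is to follow Batyrev's original strategy, in which everything is extracted from the combinatorics of primitive collections and primitive relations, with no use of projectivity. The first ingredient is that a smooth complete simplicial fan is uniquely reconstructed from the set of its primitive collections; since that set is prescribed by the hypothesis (the five collections $\mathscr{X}_\alpha=X_\alpha\cup X_{\alpha+1}$, $\alpha\in\mathbb{Z}/5\mathbb{Z}$), it suffices to determine the five primitive relations $R_0,\dots,R_4$. Writing $s_\alpha=\sum_{\rho\in X_\alpha}u_\rho$, the left-hand side of $R_\alpha$ is $s_\alpha+s_{\alpha+1}$, so the whole task reduces to identifying the right-hand side of each $R_\alpha$, i.e.\ the minimal cone of $\Sigma$ containing $s_\alpha+s_{\alpha+1}$ and the coefficients of the corresponding expansion.

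The second step localizes these right-hand sides. I would use two structural facts about a smooth complete simplicial fan: (i) the support of the primitive relation of a primitive collection $\mathscr{P}$ is disjoint from $\mathscr{P}$ itself; and (ii) a set of rays spans a cone of $\Sigma$ exactly when it contains no primitive collection, since primitive collections are the minimal non-faces of the underlying complex. By (i), the support of $R_\alpha$ lies in $\Sigma(1)\setminus\mathscr{X}_\alpha=X_{\alpha+2}\cup X_{\alpha+3}\cup X_{\alpha+4}$. By (ii), that support cannot contain all of $X_{\alpha+2}\cup X_{\alpha+3}$ nor all of $X_{\alpha+3}\cup X_{\alpha+4}$ (these are the primitive collections $\mathscr{X}_{\alpha+2}$ and $\mathscr{X}_{\alpha+3}$); in particular at least one ray of $X_{\alpha+2}\cup X_{\alpha+3}\cup X_{\alpha+4}$ is absent from the right-hand side of $R_\alpha$.

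The core of the argument is a rank-and-positivity analysis. The relations among the $n+3$ rays form a lattice of rank $m=3$ (completeness makes the $u_\rho$ span $\mathbb{R}^n$), and one checks the five primitive relations generate it, so there are exactly two linear dependencies among $R_0,\dots,R_4$. I would show that, after applying a symmetry of the pentagon, these dependencies can be written $R_1=R_2-R_3$ and $R_0=R_4-R_3$, which forces two of the five relations into ``simple'' form: $R_2$ must be $s_2+s_3=0$ (its minimal cone is $\{0\}$) and $R_3$ must be $s_3+s_4=s_1$ with all coefficients $1$. The localization step applied to one remaining ``generic'' relation $R_4$ then shows its right-hand side is supported on a subset of $(X_2\setminus\{z_1\})\cup X_3$ with nonnegative integer coefficients $c_2,\dots,c_{p_2},b_1,\dots,b_{p_3}$, where integrality and the shape of the supporting cone come from smoothness (unimodularity) of the maximal cones containing it. Finally $R_0$ and $R_1$ are computed from the two dependencies; $R_0=R_4-R_3$ automatically produces the shifted coefficients $(b_j+1)$ because $R_3$ contributes $t_1+\cdots+t_{p_3}$, and $R_1=R_2-R_3$ gives $s_1+s_2=s_4$. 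This yields exactly the listed system.

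I expect the main obstacle to be precisely the rank-and-positivity step: proving that, up to a pentagon symmetry, one primitive relation degenerates to $s_2+s_3=0$ and a second is the unit relation $s_3+s_4=s_1$, rather than some other distribution of ``simple'' versus ``generic'' relations among the five. This is where completeness has to enter in an essential way, to exclude the alternative sign and degree patterns, and the case analysis becomes delicate when several block sizes $p_\alpha$ equal $1$, since then the corrections on individual rays and the block-sum identities interact closely. A secondary and more routine task, if one wants the classification to be sharp, is to verify the converse, namely that each choice of nonnegative integers $c_i,b_j$ is realized by an actual smooth complete fan; this can be done by building the fan directly from the listed primitive relations and checking smoothness and completeness blockwise along the pentagon.
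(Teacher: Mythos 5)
This statement is quoted verbatim from Batyrev's 1991 paper (the citation is to his Theorem 6.6) and is not proven in the present paper; there is therefore no internal proof to compare against. Your outline does track the broad shape of Batyrev's argument---determine the fan from its primitive collections, restrict the support of each primitive relation to the complement of the collection and away from the other non-faces, then pin down the coefficients---but it has a genuine gap exactly where you flag one. You assert that after a pentagon symmetry the two linear dependencies among $R_0,\dots,R_4$ take the form $R_1 = R_2 - R_3$ and $R_0 = R_4 - R_3$, which would force $R_2$ to read $s_2+s_3=0$ and $R_3$ to be the unit relation $s_3+s_4=s_1$, with $R_4$ carrying the free coefficients $c_i,b_j$ and $R_0,R_1$ determined from them. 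But you offer no mechanism that selects this pattern over the alternatives: nothing excludes a configuration in which no primitive relation is trivial, or in which two relations have vanishing right-hand side, or in which the ``simple'' relations sit in non-adjacent positions of the pentagon. Merely counting dependencies (five vectors in a rank-three lattice admit two linear relations) is far too weak---the content of Batyrev's theorem is precisely \emph{which} two dependencies occur and why, and for this his argument makes essential use of the positivity of the degrees $\deg(\mathscr{X}_\alpha)=|\mathscr{X}_\alpha|-\sum c_\rho$, which he obtains from projectivity (automatic for $n+3$ rays by Kleinschmidt--Sturmfels), together with a careful walk around the pentagon tracking how the supports of consecutive primitive relations interlock and force the $z_1$- and $t$-coefficients. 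Your stated aim of avoiding projectivity only widens this gap, since completeness alone does not directly give the sign and degree constraints without first passing through the ``$n+3$ rays implies projective'' theorem. In short, the sketch correctly names the destination but does not supply the journey.
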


In Theorem \ref{Batyrev} we can identify the set 
\[\{v_1,\ldots,v_{p_0},u_2,\ldots,u_{p_4},y_2,\ldots,y_{p_1},t_1,\ldots,t_{p_3},z_2,\ldots,z_{p_2}\}\]
with the standard basis $\{e_1,\ldots,e_n\}$ of $\mathbb{Z}^n$. Thus $z_1,u_1,y_1$ are defined by
\begin{equation*}
	\begin{split}
		z_1&=-z_2-\cdots-z_{p_2}-t_1-\cdots-t_{p_3},\\
		u_1&=-v_1-\cdots-v_{p_0}-u_2-\cdots-u_{p_4}+b_1t_1+\cdots+b_{p_3}t_{p_3}+c_2z_2+\cdots+c_{p_2}z_{p_2},\\
		y_1&=-v_1-\cdots-v_{p_0}-y_2-\cdots-y_{p_1}+(b_1+1)t_1+\cdots+(b_{p_3}+1)t_{p_3}+\\
		& \quad c_2z_2+\cdots+c_{p_2}z_{p_2}.
	\end{split}    
\end{equation*}

Let $\Sigma$ be a fan as in Theorem \ref{Batyrev}. By arranging the primitive ray generators in rows of a matrix we will get a matrix $A$ of the following form

\NiceMatrixOptions{code-for-first-col = \color{red},code-for-first-row = \color{red}}
\newcommand{\blue}{\color{blue}}
\newcommand{\green}{\color{teal}}
\newcommand{\magenta}{\color{magenta}}
\setcounter{MaxMatrixCols}{20}

\begin{equation}
	A=
	\begin{pNiceMatrix}[small,first-row,first-col,margin,hvlines]
		& & \Ldots[line-style={solid,<->},shorten=0pt]^{n \text{ columns}} \\
		v_1& \Block{3-3}<\LARGE>{\blue I} & & &\Block{3-3}<\LARGE>{ \blue 0}&&&\Block{3-3}<\LARGE>{\blue 0}&&&\Block{3-3}<\LARGE>{\magenta 0}&&&\Block{3-3}<\LARGE>{ \magenta 0}&&\\
		\vdots &&  & &&&&&&&&&&  \\
		v_{p_0}&& & &  \\
		u_1& \blue -1& \blue \Cdots&\blue -1&\blue -1&\blue \Cdots&\blue -1&\blue 0&\blue \Cdots&\blue 0&\magenta b_1& \magenta \Cdots&\magenta b_{p_3}& \magenta c_2&\magenta \Cdots&\magenta c_{p_2}\\
		u_2&\Block{3-3}<\LARGE>{\blue 0} & & &\Block{3-3}<\LARGE>{\blue I}&&&\Block{3-3}<\LARGE>{\blue 0}&&&\Block{3-3}<\LARGE>{\magenta 0}&&&\Block{3-3}<\LARGE>{\magenta 0}&&\\
		\vdots &&  & &&&&&&&&&&  \\
		u_{p_4}&& & &  \\
		y_1&\blue -1&\blue \Cdots&\blue -1 &\blue 0&\blue \Cdots& \blue 0&\blue -1&\blue \Cdots&\blue -1&\magenta (b_1+1)&\magenta \Cdots&\magenta (b_{p_3}+1)&\magenta c_2&\magenta \Cdots&\magenta c_{p_2}\\
		y_2&\Block{3-3}<\LARGE>{\blue 0} & & &\Block{3-3}<\LARGE>{\blue 0}&&&\Block{3-3}<\LARGE>{\blue I}&&&\Block{3-3}<\LARGE>{\magenta 0}&&&\Block{3-3}<\LARGE>{\magenta 0}&&\\
		\vdots &&  & &&&&&&&&&&  \\
		y_{p_1}&& & &  \\
		t_1&\Block{3-3}<\LARGE>{0} & & &\Block{3-3}<\LARGE>{0}&&&\Block{3-3}<\LARGE>{0}&&&\Block{3-3}<\LARGE>{\green I}&&&\Block{3-3}<\LARGE>{\green 0}&&\\
		\vdots &&  & &&&&&&&&&&  \\
		t_{p_3}&& & &  \\
		z_2&\Block{3-3}<\LARGE>{0} & & &\Block{3-3}<\LARGE>{0}&&&\Block{3-3}<\LARGE>{0}&&&\Block{3-3}<\LARGE>{\green 0}&&&\Block{3-3}<\LARGE>{\green I}&&\\
		\vdots &&  & &&&&&&&&&&  \\
		z_{p_2}&& & &&&&  \\
		z_1&0 & \Cdots& 0 & 0&\Cdots&0&0&\Cdots&0&\green -1&\green \Cdots&\green -1&\green -1&\green \Cdots&\green -1\\
	\end{pNiceMatrix}
	\label{matrix3}.
\end{equation}

Note that $A$ has the block matrix form $\left[   
\begin{array}{c|c}
	F   & G   \\
	\hline
	0   & H 
\end{array}
\right]$.
Here $H$ corresponds to the matrix  of a smooth complete fan in $\mathbb{R}^{p_2+p_3-1}$ with $p_2+p_3$ rays (see Example \ref{rank1}).

\section{Main result}

The goal of this section is to prove Theorem \ref{IDPtheorem}. Our results enlarge the collection of polytopes satisfying the integer decomposition property. Before proving Theorem \ref{IDPtheorem}, we introduce some notations and outline the general strategy of the proof. 

Let $A$ be a matrix as in (\ref{matrix}). We will study the polytopes of the form $P(A,h)$, where $h\in \mathbb{Z}^{n+3}$ defines a convex support function. We have seen that if $h'=h+Ax$ for some $x\in\mathbb{Z}^n$, then $P(A,h')$ is a translation of $P(A,h)$ by a lattice vector. Hence, it is enough to consider $h\in \mathbb{Z}^{n+3}$ modulo the columns of $A$. It is easy to see that $\{Ae_1,\ldots,Ae_n\}\cup \{f_{\rho} :\rho=v_1,u_1,z_1 \}$ is a $\mathbb{Z}$-basis of $\mathbb{Z}^{n+3}$. Therefore, we will study $h$ of the following form
\[
h_{\rho}=\begin{cases}
	d, & \text{if $\rho=v_1$}\\
	f, & \text{if $\rho=u_1$}\\
	e, & \text{if $\rho=z_1$}\\
	0, & \text{otherwise}
\end{cases}
\]
for any $d,e,f\in \mathbb{Z}$.

The next step is to determine when $\varphi_h$ is convex. Using Theorem \ref{theoremconevex} we obtain that $\varphi_h$ is convex if and only if $\varphi_h$ satisfies the following inequalities
\begin{equation*}
	\begin{split}
		\varphi_h(v_1+\cdots+v_{p_0}+y_1+\cdots+y_{p_1})&\geq \varphi_h(v_1)+\cdots+ \varphi_h(v_{p_0})+\varphi_h(y_1)+\cdots+
		\varphi_h(y_{p_1}),\\
		\varphi_h( y_1+\cdots+y_{p_1}+z_1+\cdots+z_{p_2})&\geq \varphi_h(y_1)+\cdots+ \varphi_h(y_{p_1})+\varphi_h(z_1)+\cdots+
		\varphi_h(z_{p_2}),\\
		\varphi_h(z_1+\cdots+z_{p_2}+t_1+\cdots+t_{p_3})&\geq \varphi_h(z_1)+\cdots+ \varphi_h(z_{p_2})+\varphi_h(t_1)+\cdots+
		\varphi_h(t_{p_3}),\\
		\varphi_h(t_1+\cdots+t_{p_3}+u_1+\cdots+u_{p_4})&\geq \varphi_h(t_1)+\cdots+ \varphi_h(t_{p_3})+\varphi_h(u_1)+\cdots+
		\varphi_h(u_{p_4}),\\
		\varphi_h(u_1+\cdots+u_{p_4}+v_1+\cdots +v_{p_0})&\geq \varphi_h(u_1)+\cdots+ \varphi_h(u_{p_5})+\varphi_h(v_1)+\cdots+
		\varphi_h(v_{p_0}).
	\end{split}
\end{equation*}

Recall the construction of $\varphi_h$ from (\ref{support}). We use the associated primitive relations in Theorem \ref{Batyrev} to obtain a more concrete characterization. It follows that $\varphi_h$ is convex if and only if 
\begin{equation*}
	\begin{split}
		0\geq -d,\\
		-f\geq -e,\\
		0\geq -e,\\
		0\geq -f,\\
		0\geq -f -d.
	\end{split}    
\end{equation*}
That is $d,e,f\geq0$ and $e\geq f$. Hence, we can consider $h$ of the following form
\begin{equation}\label{height}
	h_{\rho}=\begin{cases}
		d, & \text{if $\rho=v_1$}\\
		f, & \text{if $\rho=u_1$}\\
		e+f, & \text{if $\rho=z_1$}\\
		0, & \text{otherwise}
	\end{cases}
\end{equation}
for any $d,e,f\in \mathbb{Z}_{\geq0}$.

In the following paragraphs, using (\ref{matrix}), we identify projections onto dilations of the standard simplex where the IDP is well-known. We can next verify the IDP for a pair of polytopes by carefully examining the fibers of these projections. First, we will discuss some additional notations.

Let $J,K,S,T$ be index sets given by
\begin{align*}
	J&=\{t_1,\ldots,t_{p_3},z_2,\ldots,z_{p_2}\},\\
	K&=J\cup\{z_1\},\\
	S&=\{v_1,\ldots,v_{p_0},u_2,\ldots,u_{p_4},y_2,\ldots,y_{p_1}\},\\
	T&=S\cup\{u_1,y_1\}.
\end{align*}
Now consider the projection maps to respective coordinates given by the index sets as follows:
\[
\begin{tikzcd}[column sep=small]
	& \mathbb{R}^{n+3} \arrow[dl,"\pi_T"] \arrow[dr,"\pi_K"] & &&&\mathbb{R}^{n} \arrow[dl,"\pi_S"] \arrow[dr,"\pi_J"]&\\
	\mathbb{R}^{|T|}  & & \mathbb{R}^{|K|}&& \mathbb{R}^{|S|}  & & \mathbb{R}^{|J|}
\end{tikzcd}
\]

Given a lattice polytope $P=P(A,h)$, we can write it as
\begin{equation*}
	P = \Bigg\{ (x_{S},x_{J})=x \in \mathbb{R}^n: \left[   
	\begin{array}{c|c}
		F   & G   \\
		\hline
		0   & H 
	\end{array}
	\right] . 
	\left[   
	\begin{array}{c}
		x_S   \\
		\hline
		x_J 
	\end{array}
	\right]
	\geq 
	\left[   
	\begin{array}{c}
		-h_T   \\
		\hline
		-h_K 
	\end{array}
	\right]
	\Bigg\}
\end{equation*}
where $x_S=\pi_S(x)$, $x_J=\pi_J(x)$ and $h_T=\pi_T(h)$, $h_K=\pi_K(h)$. First, consider the polytope $ P^{\Delta}$ defined by
\[  P^{\Delta}:=P(H,h_K)\subseteq \mathbb{R}^{|J|}.\]
It is worth noting that $P^{\Delta}=(e+f)\Delta_{|J|}$, where $\Delta_{|J|}=\text{Conv}(0,e_{n+1-|J|},\ldots,e_n)$ is the standard $|J|$-simplex. We can also look at $P^{\Delta}$ from a geometrical viewpoint: indeed, we can show that $P^{\Delta}=\pi_{J}(P)$. It is easy to see that $\pi_J(P)\subseteq P(H,h_K)$. Conversely, if $x_J \in P(H,h_K)$, then $(0,x_J)\in P$. 

Let $\alpha \in P \cap \mathbb{Z}^n$. Set $\theta(\alpha)=\pi_T(h)+G\alpha_J$ and define a new polytope
\[\widetilde{P}(\alpha):=P(F,\theta(\alpha)) \subseteq \mathbb{R}^{|S|}.\]
By construction, if $x_S \in \widetilde{P}(\alpha)$, then $(x_S,\alpha_J) \in P$. Hence, we can also write 
\[\widetilde{P}=\pi_S(\pi_J^{-1}(\alpha_J)\cap P).\]

Geometrically, $\widetilde{P}(\alpha)$ is constructed by intersecting the original polytope $P$ with hyperplane sections defined by $x_j=\alpha_j$ for $j\in J$ and projecting into $\mathbb{R}^{|S|}$. See Figure \ref{projection} for the three-dimensional interpretation.

\begin{figure}[htpb]
	\centering
	\begin{subfigure}[b]{0.2\textwidth}
		\begin{tikzpicture}[scale=0.3]
			\coordinate (A1) at (0,0);
			\coordinate (A2) at (6,0);
			\coordinate (A3) at (4.5,3);
			
			\coordinate (B1) at (0,11);
			\coordinate (B2) at (4,11);
			\coordinate (B3) at (3,13);
			
			\coordinate (C1) at (0,6);
			\coordinate (C2) at (8,6);
			\coordinate (C3) at (6,10);
			
			\coordinate (D1) at (9,6);
			\coordinate (D2) at (10,6);
			
			\coordinate (O) at (0,15);
			
			\coordinate (a1) at (-2,0);
			\coordinate (b1) at (-2,11);
			\coordinate (c1) at (-2,6);
			\begin{scope}[thick,opacity=0.6]
				\draw (A1) -- (A2);
				\draw (A1) -- (B1); 
				\draw (A2) -- (C2);
				\draw (C2) -- (B2);
				\draw (B1) -- (B2) -- (B3)--cycle;
				\draw  (C3)--(B3);
			\end{scope}
			
			\begin{scope}[thick,dashed,,opacity=0.6]
				\draw (A1) -- (A2) -- (A3)--cycle;
				\draw (B1) -- (B2) -- (B3)--cycle;
				\draw (C1) -- (C2) -- (C3)--cycle;
				\draw (A1) -- (C1);
				\draw (B1) -- (C1);
				\draw (A2) -- (C2);
				\draw (B2) -- (C2);
				\draw (A3) -- (C3);
				\draw (B3) -- (C3);
			\end{scope}
			\draw[fill=cof,opacity=0.6] (A1) -- (A2) -- (A3);
			\draw[fill=pur,opacity=0.6] (C1) -- (C2) -- (C3);
			\draw[fill=greeo,opacity=0.6] (B1) -- (B2) -- (B3);
			\filldraw[black] (A1) circle (5pt); 
			\filldraw[black] (A2) circle (5pt);
			\filldraw[black] (A3) circle (5pt);
			\filldraw[black] (B1) circle (5pt);
			\filldraw[black] (B2) circle (5pt);
			\filldraw[black] (B3) circle (5pt);
			\filldraw[black] (C2) circle (5pt);
			\filldraw[black] (C3) circle (5pt);
			\node at (O) {$P$};
		\end{tikzpicture}
	\end{subfigure}
	\quad
	\begin{subfigure}[b]{0.2\textwidth}
		\begin{tikzpicture}[scale=0.3]
			\draw (A1) -- (B1);
			\node at (O) {$P^{\Delta}$};
			\filldraw[black](A1) circle (2pt); 
			\node at (a1) {0};
			\filldraw[black](C1) circle (2pt);
			\node at (c1) {$f$};
			\filldraw[black](B1) circle (2pt);
			\node at (b1) {$f+e$};
		\end{tikzpicture}
	\end{subfigure}
	\begin{subfigure}[b]{0.2\textwidth}
		\begin{tikzpicture}[scale=0.3]
			\node at (0,16) {$\widetilde{P}$ };
			\draw (A1) -- (A2) -- (A3)--cycle;
			\draw[fill=cof,opacity=0.6] (A1) -- (A2) -- (A3);
			\draw[fill=pur,opacity=0.6] (C1) -- (C2) -- (C3);
			\draw[fill=greeo,opacity=0.6] (B1) -- (B2) -- (B3);
		\end{tikzpicture}
	\end{subfigure}
	\caption{Projection of a polytope}
	\label{projection}
\end{figure}
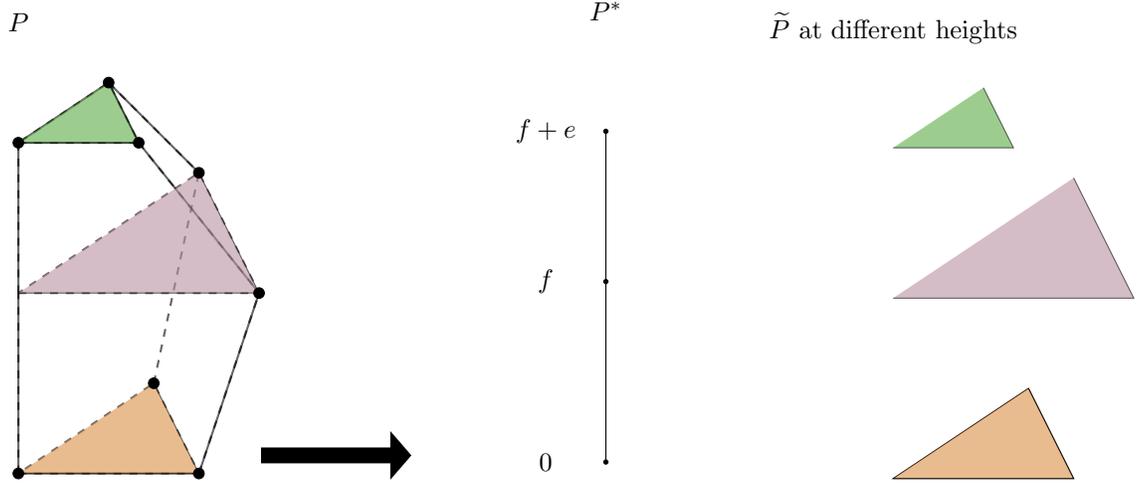

Using the construction of the above polytopes we will now use the following algorithm.

\begin{itemize}
	\item Given two lattice polytopes $P=P(A,h),Q=P(A,h')$  and their Minkowski sum $(P+Q)=P(A,h+h')$.
	\item We find projection to $\mathbb{R}^{|J|}$ and construct $P^{\Delta},Q^{\Delta}, (P+Q)^{\Delta}$. Note that the pair $(P^{\Delta},Q^{\Delta})$ has the IDP. 
	\item  For every $\alpha_J \in (P+Q) \cap \mathbb{Z}^{|J|}$ find all possible $\beta_J \in P \cap \mathbb{Z}^{|J|}$ and $\gamma_J \in Q \cap \mathbb{Z}^{|J|}$ such that $\beta_J +\gamma_J =\alpha_J$.
	\item Analyze the fibers of $\alpha_J,\beta_J$ and $\gamma_J$ by using the construction $\widetilde{P}(\beta_J),\widetilde{Q}(\gamma_J)$ and $\widetilde{(P+Q)}(\alpha_J)$.
	\item For all  $(\alpha_S,\alpha_J) \in \pi_J^{-1}(\alpha_J) \cap(P+Q)\cap \mathbb{Z}^n$ find $(\beta_J,\beta_S) \in \pi_J^{-1}(\beta_J) \cap P\cap \mathbb{Z}^n$ and $(\gamma_J,\gamma_S) \in \pi_J^{-1}(\gamma_J) \cap Q\cap \mathbb{Z}^n$ such that $(\beta_S,\beta_J)+(\gamma_s,\gamma_J)=(\alpha_S,\alpha_J)$.
\end{itemize}

With the following example, we will illustrate how to use the construction of $P^{\Delta}$ and $\widetilde{P}$ to prove the IDP.

\begin{example}\label{example8}
	Let  
	
	\begin{equation*}
		\begin{split}
			A=
			\begin{pNiceMatrix}[first-col]
				v_1& 1&0&0\\
				v_2 &0&1 &0   \\
				u_1&-1&-1 &1   \\
				y_1& -1 &-1 &2\\
				t_1&0 &0&1 \\
				z_1&0&0&-1
			\end{pNiceMatrix},
		\end{split}
		\quad
		\begin{split}
			h=
			\begin{pmatrix}
				0\\
				0\\
				3\\
				0\\
				0\\
				4\\
			\end{pmatrix},
		\end{split}
		\quad
		\begin{split}
			h'=
			\begin{pmatrix}
				2\\
				0\\
				1\\
				0\\
				0\\
				3
			\end{pmatrix},
		\end{split}
	\end{equation*}
	and $P=P(A,h)$, $Q=P(A,h')$. Then $P+Q=P(A,h+h')$. 
	We will show that $P\cap \mathbb{Z}^3 + Q\cap \mathbb{Z}^3 = (P+Q)\cap \mathbb{Z}^3$. In Figure \ref{secondproj}, we have drawn the polytopes $P^{\Delta},Q^{\Delta}$ and $(P+Q)^{\Delta}$ and it is easy to see that $P^{\Delta}+Q^{\Delta}=(P+Q)^{\Delta}$ and $(P^{\Delta},Q^{\Delta})$ has the IDP. Hence, for every $\alpha_{t_1}\in (P+Q)^{\Delta}\cap \mathbb{Z}$, we can find
	$\beta_{t_1} \in P^{\Delta}\cap \mathbb{Z},\gamma_{t_1} \in Q^{\Delta}\cap \mathbb{Z}$ such that $\beta_{t_1}+\gamma_{t_1}=\alpha_{t_1}$.
	
	Let 
	\begin{equation*}
		\begin{split}
			F=
			\begin{pNiceMatrix}[first-col]
				v_1& 1&0\\
				v_2 &0&1\\
				u_1&-1&-1\\
				y_1& -1 &-1
			\end{pNiceMatrix},    
		\end{split}
		\quad
		\begin{split}
			\theta(\beta_{t_1})=
			\begin{pmatrix}
				0\\
				0\\
				3+\beta_{t_1}\\
				2\beta_{t_1}\\
			\end{pmatrix},    
		\end{split}
		\quad
		\begin{split}
			\theta(\gamma_{t_1})=
			\begin{pmatrix}
				2\\
				0\\   
				1+\gamma_{t_1}\\
				2\gamma_{t_1}\\
			\end{pmatrix}.    
		\end{split}
	\end{equation*}
	Then $\widetilde{P}(\beta_{t_1})=P(F,\theta(\beta_{t_1}))$, $\widetilde{Q}(\gamma_{t_1})=P(F,\theta(\gamma_{t_1}))$ and $\widetilde{(P+Q)}(\alpha_{t_1})=P(F,\theta(\alpha_{t_1}))$. Let $(\alpha_{v_1},\alpha_{v_2},\alpha_{t_1}) \in (P+Q)\cap \mathbb{Z}^3$.
	The basic idea is to show that for every $\alpha_{t_1}\in (P+Q)^{\Delta}\cap \mathbb{Z}$, we can find
	$\beta_{t_1},\gamma_{t_1}$ additionally satisfying the following two conditions:
	\begin{enumerate}
		\item $\widetilde{P}(\beta_{t_1})+\widetilde{Q}(\gamma_{t_1})=\widetilde{(P+Q)}(\alpha_{t_1})$.
		\item $(\widetilde{P}(\beta_{t_1}),\widetilde{Q}(\gamma_{t_1}))$ has the IDP.
	\end{enumerate}
	Suppose such a choice exists. Since $(\alpha_{v_1},\alpha_{v_2}) \in \widetilde{(P+Q)}(\alpha_{t_1})$ we can find $(\beta_{v_1},\beta_{v_2})\in \widetilde{P}(\beta_{t_1})\cap \mathbb{Z}^2$ and $(\gamma_{v_1},\gamma_{v_2})\in \widetilde{P}(\gamma_{t_1})\cap \mathbb{Z}^2$ such that
	\[(\beta_{v_1},\beta_{v_2})+(\gamma_{v_1},\gamma_{v_2})=(\alpha_{v_1},\alpha_{v_2}).\]
	Then we conclude the proof in this case by noticing that 
	\[(\beta_{v_1},\beta_{v_2},\beta_{t_1})+(\gamma_{v_1},\gamma_{v_2},\gamma_{t_1})=(\alpha_{v_1},\alpha_{v_2},\alpha_{t_1}).\]

	Hence, it is enough to show that choices of
	$\beta_{t_1},\gamma_{t_1}$ additionally satisfying the extra two conditions. We proceed by two cases depending on $0\leq \alpha_{t_1}\leq4$ or $4\leq \alpha_{t_1}\leq7$. 
	
	\textbf{Case 1.} If $4\leq \alpha_{t_1} \leq 7$, then choose $3\leq \beta_{t_1} \leq 4,\beta_{t_1}\in \mathbb{Z}$ and $1\leq \gamma_{t_1}\leq 3,\gamma_{t_1} \in \mathbb{Z}$ such that $\alpha_{t_1}=\beta_{t_1}+\gamma_{t_1}$.  The case $\alpha_{t_1}=7$, is shown in Figure \ref{projectionfirst}. Note that the half-space generated by the ray $u_1$ is not a supporting half-space for $\widetilde{(P+Q)}(\alpha_{t_1})$. 
	Our choice of $\beta_{t_1},\gamma_{t_1}$ ensure that the half-space generated by the ray $u_1$ is also not a supporting half-space for $\widetilde{P}(\beta_{t_1})$ and $\widetilde{Q}(\gamma_{t_1})$. Hence, we have $\widetilde{P}=P(\widetilde{F},\widetilde{\theta})$, $\widetilde{Q}=P(\widetilde{F},\widetilde{\theta'})$, where
	
	\begin{equation*}
		\begin{split}
			\widetilde{F}=
			\begin{pNiceMatrix}[first-col]
				v_1& 1&0\\
				v_2 &0&1    \\
				y_1& -1 &-1
			\end{pNiceMatrix},    
		\end{split}
		\quad
		\begin{split}
			\widetilde{\theta}=
			\begin{pmatrix}
				0\\
				0\\
				2\beta_{t_1}
			\end{pmatrix},    
		\end{split}
		\quad
		\begin{split}
			\widetilde{\theta'}=
			\begin{pmatrix}
				2\\
				0\\   
				2\gamma_{t_1}
			\end{pmatrix}.   
		\end{split}
	\end{equation*}
	Since $\widetilde{F}$ corresponds to a smooth complete splitting fan in $\mathbb{R}^2$ and $\widetilde{\theta},\widetilde{\theta'}$ are convex support functions we have $\widetilde{P}+\widetilde{Q}=\widetilde{P+Q}$ and $(\widetilde{P},\widetilde{Q})$ has IDP.
	
	\textbf{Case 2.} If $0\leq \alpha_{t_1} \leq 4$, then choose $0\leq \beta_{t_1} \leq 3,\beta_{t_1}\in \mathbb{Z}$ and $0\leq \gamma_{t_1}\leq 1,\gamma_{t_1} \in \mathbb{Z}$ such that $\alpha_{t_1}=\beta_{t_1}+\gamma_{t_1}$.  The case $\alpha_{t_1}=4$, is shown in Figure \ref{projectiofirst2} and the case $\alpha_{t_1}=0$, is shown in Figure \ref{projectiofirst3}. Note that the half-space generated by the ray $y_1$ is not a supporting half-space for $\widetilde{(P+Q)}(\alpha_{t_1})$. 
	Our choice of $\beta_{t_1},\gamma_{t_1}$ ensure that the half-space generated by the ray $y_1$ is also not a supporting half-space for $\widetilde{P}(\beta_{t_1})$ and $\widetilde{Q}(\gamma_{t_1})$. Hence, we have $\widetilde{P}=P(\widetilde{F},\widetilde{\theta})$, $\widetilde{Q}=P(\widetilde{F},\widetilde{\theta'})$, where
	\begin{equation*}
		\begin{split}
			\widetilde{F}=
			\begin{pNiceMatrix}[first-col]
				v_1& 1&0\\
				v_2 &0&1    \\
				u_1& -1 &-1
			\end{pNiceMatrix},    
		\end{split}
		\quad
		\begin{split}
			\widetilde{\theta}=
			\begin{pmatrix}
				0\\
				0\\
				\beta_{t_1}+3\\
			\end{pmatrix},    
		\end{split}
		\quad
		\begin{split}
			\widetilde{\theta'}=
			\begin{pmatrix}
				2\\
				0\\   
				\gamma_{t_1}+1\\
			\end{pmatrix}.    
		\end{split}   
	\end{equation*}
	Since $\widetilde{F}$ corresponds to a smooth complete splitting fan in $\mathbb{R}^2$ and $\widetilde{\theta},\widetilde{\theta'}$ are convex support functions we have $\widetilde{P}+\widetilde{Q}=\widetilde{P+Q}$ and $(\widetilde{P},\widetilde{Q})$ has IDP.
\end{example}

\begin{remark}
	If we use a similar type of construction for a general lattice polytope, $\widetilde{P}(\alpha)$ does not have to be a lattice polytope. However, in our case, we can associate it with a smooth complete fan and a convex support function (see cases 1,2,3,4 in the Proof of Theorem 1.4). As a result, it is always a lattice polytope.
\end{remark}

\begin{figure}[htpb]
	\centering
	\begin{subfigure}[b]{0.2\textwidth}
		\begin{tikzpicture}[scale=0.2]
			
			\coordinate (O) at (0,-5);
			
			\coordinate (A11) at (0,0); 
			\coordinate (B11) at (0,12); 
			\coordinate (C11) at (0,16);
			
			\draw[blue] (A11)--(B11);
			\draw[red] (B11)--(C11);
			
			\filldraw[black] (A11) circle (10pt);
			\filldraw[black] (B11) circle (10pt);
			\filldraw[black] (C11) circle (10pt);
			
			\node at ($ (A11) + (-3,0) $) {$0$};
			\node at ($ (B11) + (-3,0) $) {$3$};
			\node at ($ (C11) + (-3,0) $) {$4$};
			\node at (10,6) {\textbf{+}};
		\end{tikzpicture}
	\end{subfigure}
	\quad 
	\begin{subfigure}[b]{0.2\textwidth}
		\begin{tikzpicture}[scale=0.2]
			\coordinate (A11) at (0,0); 
			\coordinate (B11) at (0,4); 
			\coordinate (C11) at (0,12);
			\draw[blue] (A11)--(B11);
			\draw[red] (B11)--(C11);
			
			\filldraw[black] (A11) circle (10pt);
			\filldraw[black] (B11) circle (10pt);
			\filldraw[black] (C11) circle (10pt);
			
			\node at ($ (A11) + (-3,0) $) {$0$};
			\node at ($ (B11) + (-3,0) $) {$1$};
			\node at ($ (C11) + (-3,0) $) {$3$};
			\node at (10,6) {\textbf{=}};
		\end{tikzpicture}
	\end{subfigure}
	\quad
	\begin{subfigure}[b]{0.2\textwidth}
		\begin{tikzpicture}[scale=0.2]

			\coordinate (A11) at (0,0); 
			\coordinate (B11) at (0,16); 
			\coordinate (C11) at (0,28);
			\draw[blue] (A11)--(B11);
			\draw[red] (B11)--(C11);
			
			\filldraw[black] (A11) circle (10pt);
			\filldraw[black] (B11) circle (10pt);
			\filldraw[black] (C11) circle (10pt);
			
			\node at ($ (A11) + (-3,0) $) {$0$};
			\node at ($ (B11) + (-3,0) $) {$4$};
			\node at ($ (C11) + (-3,0) $) {$7$};
			
		\end{tikzpicture}
	\end{subfigure}
	\caption{Polytopes $P^{\Delta},Q^{\Delta},(P+Q)^{\Delta}$}
	\label{secondproj}
\end{figure}

\begin{figure}[htpb]
	\centering
	\begin{subfigure}[b]{0.2\textwidth}
		\begin{tikzpicture}[scale=0.15]
			\coordinate (C11) at (0,0);
			\coordinate (C12) at (7,0);
			\coordinate (C13) at (0,7);
			
			\draw (C11)--(C12)--(C13)--cycle;
			
			\node at (11,3) {\textbf{+}};
			
			\filldraw[black] (C11) circle (10pt);
			\node at ($(C11)+(-2,-2)$) {$(0,0)$};
			
			\filldraw[black] (C12) circle (10pt);
			\node at ($(C12)+(2,-2)$) {$(7,0)$};
			
			\filldraw[black] (C13) circle (10pt);
			\node at ($(C13)+(0,2)$) {$(0,7)$};
			
		\end{tikzpicture}
	\end{subfigure}\quad
	\begin{subfigure}[b]{0.2\textwidth}
		\begin{tikzpicture}[scale=0.15]
			\coordinate (C11) at (0,0);
			\coordinate (C12) at (6,0);
			\coordinate (C13) at (0,6);
			\draw (C11)--(C12)--(C13)--cycle;
			
			\node at (10,3) {\textbf{=}};
			
			\filldraw[black] (C11) circle (10pt);
			\node at ($(C11)+(-2,-2)$) {$(-2,0)$};
			
			\filldraw[black] (C12) circle (10pt);
			\node at ($(C12)+(2,-2)$) {$(4,0)$};
			
			\filldraw[black] (C13) circle (10pt);
			\node at ($(C13)+(0,2)$) {$(-2,6)$};
			
		\end{tikzpicture}
	\end{subfigure}\quad
	\begin{subfigure}[b]{0.2\textwidth}
		\begin{tikzpicture}[scale=0.15]
			\coordinate (C11) at (-1,0);
			\coordinate (C12) at (12,0);
			\coordinate (C13) at (-1,13);
			
			\draw (C11)--(C12)--(C13)--cycle;
			
			\filldraw[black] (C11) circle (10pt);
			\node at ($(C11)+(-2,-2)$) {$(-2,0)$};
			
			\filldraw[black] (C12) circle (10pt);
			\node at ($(7,-2)$) {(11,0)};
			\filldraw[black] (C13) circle (10pt);
			\node at ($(C13)+(0,2)$) {$(-2,13)$};
		\end{tikzpicture}
	\end{subfigure}
	\caption{$\widetilde{P(4)},\widetilde{Q(3)},\widetilde{(P+Q)(7)}$}
	\label{projectionfirst}
\end{figure}
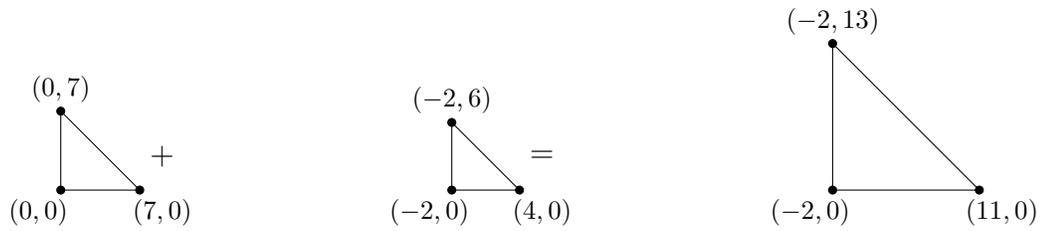

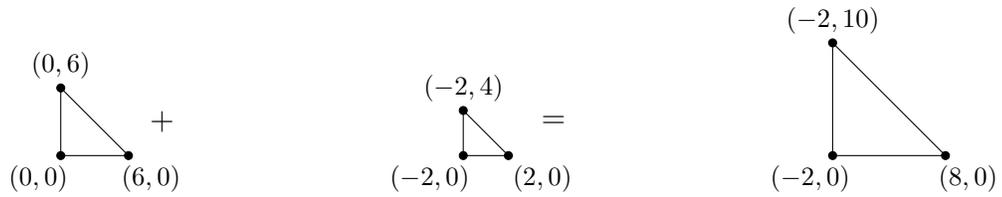
\begin{figure}[htpb]
	\centering
	\begin{subfigure}[b]{0.2\textwidth}
		\begin{tikzpicture}[scale=0.15]
			
			\coordinate (B11) at (0,0); 
			\coordinate (B12) at (6,0);
			\coordinate (B13) at (0,6);

			\draw (B11)--(B12)--(B13)--cycle;
			
			\filldraw[black] (B11) circle (10pt);
			\node at ($(B11)+(-2,-2)$) {$(0,0)$};
			
			\filldraw[black] (B12) circle (10pt);
			\node at ($(B12)+(2,-2)$) {$(6,0)$};
			
			\filldraw[black] (B13) circle (10pt);
			\node at ($(B13)+(0,2)$) {$(0,6)$};
			
			\node at (9,3) {+};
			
		\end{tikzpicture}
	\end{subfigure}\quad
	\begin{subfigure}[b]{0.2\textwidth}
		\begin{tikzpicture}[scale=0.15]
			
			\coordinate (B11) at (0,0); 
			\coordinate (B12) at (4,0);
			\coordinate (B13) at (0,4);
			
			\draw (B11)--(B12)--(B13)--cycle;
			
			\filldraw[black] (B11) circle (10pt);
			\node at ($(B11)+(-3,-2)$) {$(-2,0)$};
			
			\filldraw[black] (B12) circle (10pt);
			\node at ($(B12)+(3,-2)$) {$(2,0)$};
			
			\filldraw[black] (B13) circle (10pt);
			\node at ($(B13)+(0,2)$) {$(-2,4)$};
			
			\node at (8,2) {=};
			
		\end{tikzpicture}
	\end{subfigure}\quad
	\begin{subfigure}[b]{0.2\textwidth}
		\begin{tikzpicture}[scale=0.15]
			\coordinate (B11) at (0,0); 
			\coordinate (B12) at (10,0);
			\coordinate (B13) at (0,10);
			
			\draw (B11)--(B12)--(B13)--cycle;
			
			\filldraw[black] (B11) circle (10pt);
			\node at ($(B11)+(-2,-2)$) {$(-2,0)$};
			
			\filldraw[black] (B12) circle (10pt);
			\node at ($(9,-2)$) {$(8,0)$};
			
			\filldraw[black] (B13) circle (10pt);
			\node at ($(B13)+(0,2)$) {$(-2,10)$};
		\end{tikzpicture}
	\end{subfigure}
	\caption{$\widetilde{P(3)},\widetilde{Q(1)},\widetilde{(P+Q)(4)}$}
	\label{projectiofirst2}
\end{figure}

\begin{figure}[htpb]
	\centering
	\begin{subfigure}[b]{0.2\textwidth}
		\begin{tikzpicture}[scale=0.2]
			
			\coordinate (A11) at (0,0); 
			
			\filldraw[black] (A11) circle (10pt);
			\node at ($(A11)+(-2,-2)$) {$(0,0)$};

			\node at (9,2) {+};
			
		\end{tikzpicture}
	\end{subfigure}\quad
	\begin{subfigure}[b]{0.2\textwidth}
		\begin{tikzpicture}[scale=0.2]
			
			\coordinate (A11) at (0,0); 
			\coordinate (A12) at (2,0); 
			\coordinate (A13) at (0,2); 
			
			\draw (A11)--(A12)--(A13)--cycle;
			
			\filldraw[black] (A11) circle (10pt);
			\node at ($(A11)+(-3,-2)$) {$(-2,0)$};
			
			\filldraw[black] (A12) circle (10pt);
			\node at ($(A12)+(3,-2)$) {$(0,0)$};
			
			\filldraw[black] (A13) circle (10pt);
			\node at ($(A13)+(0,2)$) {$(-2,2)$};
			\node at (6,2) {=};
		\end{tikzpicture}
	\end{subfigure}\quad
	\begin{subfigure}[b]{0.2\textwidth}
		\begin{tikzpicture}[scale=0.2]
			
			\coordinate (A11) at (0,0); 
			\coordinate (A12) at (2,0); 
			\coordinate (A13) at (0,2); 
			
			\draw (A11)--(A12)--(A13)--cycle;
			
			\filldraw[black] (A11) circle (10pt);
			\node at ($(A11)+(-3,-2)$) {$(-2,0)$};
			
			\filldraw[black] (A12) circle (10pt);
			\node at ($(A12)+(3,-2)$) {$(0,0)$};
			
			\filldraw[black] (A13) circle (10pt);
			\node at ($(A13)+(0,2)$) {$(-2,2)$};
			
		\end{tikzpicture}
	\end{subfigure}
	\caption{$\widetilde{P(0)},\widetilde{Q(0)},\widetilde{(P+Q)(0)}$}
	\label{projectiofirst3}
\end{figure}
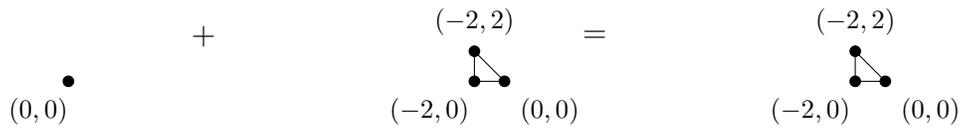

\begin{proof}[Proof of Theorem \ref{IDPtheorem}]
	Let $\Sigma$ be a smooth complete fan in $\mathbb{R}^n$. Suppose $\Sigma$ has at most $n+2$ rays or a splitting fan with $n+3$ rays. Then the result follows from Theorem \ref{IDP4}. Hence we can assume $P=P(A,h)$ and $Q=P(A,h')$, where $A$ is a matrix as in \eqref{matrix} and $h,h'$ are convex support functions as in \eqref{height}. Then we have $P+Q=P(A,h+h')$ by \eqref{sumpolytopes}. Our goal is to show that the pair $(P,Q)$ has the IDP. Let $\alpha \in (P+Q)\cap \mathbb{Z}^n$. We will find $\beta \in P\cap \mathbb{Z}^n$ and $\gamma \in Q\cap \mathbb{Z}^n$ such that $\beta+\gamma=\alpha$. We will use the projection technique discussed in the beginning of this section. 
	
	Let
	\begin{align*}
		P^{\Delta}&=P(H,h_{K}), & Q^{\Delta}&=P(H,h'_{K}), & (P+Q)^{\Delta}&=P(H,(h+h')_{K}).
	\end{align*}
	Since $P^{\Delta}=(e+f)\Delta_{|J|},Q^{\Delta}=(e'+f')\Delta_{|J|}$ and $(P+Q)^{\Delta}=(e+e'+f+f')\Delta_{|J|}$ we have 
	\[(P^{\Delta}\cap \mathbb{Z}^{|J|}) + (Q^{\Delta}\cap \mathbb{Z}^{|J|}) = (P+Q)^{\Delta}\cap \mathbb{Z}^{|J|}.\]
	Also notice that $\alpha_{J} \in (P+Q)^{\Delta}\cap \mathbb{Z}^{|J|}$. Hence, we can find $\beta_{J} \in P^{\Delta}\cap \mathbb{Z}^{|J|}$ and $\gamma_{J} \in Q^{\Delta}\cap \mathbb{Z}^{|J|}$ such that $\alpha_J=\beta_{J}+\gamma_J$. Furthermore, we will consider two subcases depending on the value of $\sum_{i\in X_3}\alpha_i$.

	\begin{itemize}
		\item If  $0\leq \sum_{i\in X_3} \alpha_i\leq f+f'$, we can choose $\beta_{J},\gamma_J$ such that $ 0\leq \sum_{i\in X_3} \beta_i\leq f$ and $0\leq \sum_{i\in X_3} \gamma_i\leq f'$.
		\item Similarly, if $ f+f'\leq \sum_{i\in X_3}\alpha_i\leq f+f'+e+e'$, we can choose  $\beta_{J},\gamma_J$ such that $ f\leq \sum_{i\in X_3} \beta_i\leq f+e$ and $f'\leq \sum_{i\in X_3} \gamma_i\leq f'+e'$.  
	\end{itemize}
	Now consider polytopes
	\begin{align*}
		\widetilde{P}&=P(F,\theta), & \widetilde{Q}&=P(F,\theta'), &\widetilde{P+Q}&=P(F,(\theta+\theta')).
	\end{align*}
	Note that $F$ is an $(|S|+2)\times |S|$ matrix. Depending on the values of $p_1$ and $p_4$, we will consider four cases. In each case, we will consider two subcases depending on the value of $\sum_{i\in X_3}\alpha_j$. Further to that, in every subcase, we can find an alternative description of $\widetilde{P},\widetilde{Q}$ and $\widetilde{P+Q}$ as

	\begin{align*}
		\widetilde{P}&=P(\widetilde{F},\widetilde{\theta}), & \widetilde{Q}&=P(\widetilde{F},\widetilde{\theta'}), &\widetilde{P+Q}&=P(\widetilde{F},\widetilde{\theta}+\widetilde{\theta'}),
	\end{align*}
	where $\widetilde{F}$ is associated to a smooth complete splitting fan $\widetilde{\Sigma}$ and $\widetilde{\theta},\widetilde{\theta'}$ are convex support functions for $\widetilde{\Sigma}$. Then by Theorem \ref{IDP4} we have 
	\[\widetilde{P}\cap \mathbb{Z}^{|S|}+\widetilde{Q}\cap \mathbb{Z}^{|S|}=\widetilde{P+Q}\cap \mathbb{Z}^{|S|}.\] 
	Hence, we can find $\beta_{L} \in \widetilde{P}\cap \mathbb{Z}^{|S|}$ and $\gamma_{L} \in \widetilde{Q}\cap \mathbb{Z}^{|S|}$ such that $\alpha_L=\beta_{L}+\gamma_L$.
	Finally, we can complete the proof by noticing that $(\beta_L,\beta_J)\in P\cap \mathbb{Z}^n$, $(\gamma_L,\gamma_J)\in Q\cap \mathbb{Z}^n$ and $(\beta_L,\beta_J)+(\gamma_L,\gamma_J)=(\alpha_L,\alpha_J)$.
	
	Here are the four cases.
	
	\textbf{Case 1.}
	If $p_1=p_4=1$, then we have 
	
	\begin{equation*}
		\begin{split}
			F=
			\begin{pNiceMatrix}[first-row,first-col,margin,hvlines]
				& & \Ldots[line-style={solid,<->},shorten=0pt]^{p_0\text{ columns}} \\
				{v_1}& \Block{3-3}<\LARGE>{\blue I} &&\\
				\vdots &&  &   \\
				{v_{p_0}}&& &   \\
				{u_1}& \blue -1 & \blue \Cdots&\blue -1\\
				{y_1}&\blue -1 &\blue \Cdots&\blue -1 \\
			\end{pNiceMatrix},
		\end{split}
		\quad
		\begin{split}
			\theta+\theta'=
			\begin{pNiceMatrix}[first-col]
				v_1& d\\
				\vdots&\vdots\\
				v_{p_0} &0 \\
				u_1& f+\sum_{i\in X_3} b_i\alpha_i+\sum_{j\in X_2\backslash\{z_1\}} c_j\alpha_j \\
				y_1& \sum_{i\in X_3} (b_i+1)\alpha_i+\sum_{j\in X_2\backslash\{z_1\}} c_j\alpha_j \\
			\end{pNiceMatrix},    
		\end{split}
	\end{equation*}

	\begin{equation*}
		\begin{split}
			\theta=
			\begin{pNiceMatrix}[first-col]
				v_1& d\\
				\vdots&\vdots\\
				v_{p_0} &0 \\
				u_1&\displaystyle{f+\sum_{i\in X_3} b_i\beta_i+\sum_{j\in X_2\backslash\{z_1\}} c_j\beta_j} \\
				y_1& \displaystyle{\sum_{i\in X_3} (b_i+1)\beta_i+\sum_{j\in X_2\backslash\{z_1\}} c_j\beta_j} \\
			\end{pNiceMatrix},
		\end{split}
		\quad
		\begin{split}
			\theta'=
			\begin{pmatrix}
				d'\\
				\vdots\\
				0 \\
				\displaystyle{f'+\sum_{i\in X_3} b_i\gamma_i+\sum_{j\in X_2\backslash\{z_1\}} c_j\gamma_j} \\
				\displaystyle{\sum_{i\in X_3} (b_i+1)\gamma_i+\sum_{j\in X_2\backslash\{z_1\}} c_j\gamma_j} \\ 
			\end{pmatrix}.
		\end{split}
	\end{equation*}

	As discussed in the previous paragraph, we need to associate the above data to a smooth complete fan and convex support functions. We first analyze the polytope $\widetilde{(P+Q)}(\alpha)=P(F,\theta+\theta')$. 
	
	It is clear that depending on the sum $\sum_{i\in X_3}\alpha_i$ one of the equations from the rows $u_1,y_1$ is redundant. 
	\begin{itemize}
		\item If $0\leq \sum_{i\in X_3} \alpha_i\leq f+f'$, we choose $\beta_J,\gamma_J$ such that $ 0\leq \sum_{i\in X_3} \beta_i\leq f$ and $0\leq \sum_{i\in X_3} \gamma_i\leq f'$. In this case, we construct $\widetilde{F},\widetilde{\theta},\widetilde{\theta'},\widetilde{\theta+\theta'}$ from $F,\theta,\theta',\theta+\theta'$ by removing the row $u_1$. Then we have 
		\begin{align*}
			\widetilde{P}(\beta)&=P(\widetilde{F},\widetilde{\theta}), & \widetilde{Q}(\gamma)&=P(\widetilde{F},\widetilde{\theta'}), &\widetilde{(P+Q)}(\alpha)&=P(\widetilde{F},\widetilde{\theta+\theta'}).
		\end{align*}
		\item If $f+f'\leq \sum_{i\in X_3} \alpha_i\leq f+e+f'+e'$, we choose $\beta_J,\gamma_J$ such that $ f\leq \sum_{i\in X_3} \beta_i\leq f+e$ and $f'\leq \sum_{i\in X_3} \gamma_i\leq f'+e'$. In this case, we construct $\widetilde{F},\widetilde{\theta},\widetilde{\theta'},\widetilde{\theta+\theta'}$ from $F,\theta,\theta',\theta+\theta'$ by removing the row $y_1$. Then we have 
		\begin{align*}
			\widetilde{P}(\beta)&=P(\widetilde{F},\widetilde{\theta}), & \widetilde{Q}(\gamma)&=P(\widetilde{F},\widetilde{\theta'}), &\widetilde{(P+Q)}(\alpha)&=P(\widetilde{F},\widetilde{\theta+\theta'}).
		\end{align*}    
	\end{itemize}
	Note that in both cases $\widetilde{F}$ is the same. Consider the smooth complete splitting fan $\Sigma$ in $\mathbb{R}^{p_0}$ with $p_0+1$ rays (see Example \ref{rank1}). Then the matrix representing the ray generators of $\Sigma$ is equal to $\widetilde{F}$. Also note that entries in  $\widetilde{\theta},\widetilde{\theta'}$ are nonnegative. Hence, it follows that they define convex support functions on $\Sigma$ (see Example \ref{rank1}).
	
	\textbf{Case 2.}
	If $p_1,p_4\geq2$, then we have
	\setcounter{MaxMatrixCols}{20}
	\[
	F=
	\begin{pNiceMatrix}[first-row,first-col,margin,hvlines]
		& & \Ldots[line-style={solid,<->},shorten=0pt]^{p_0+p_1+p_4-2 \text{ columns}} \\
		v_1& \Block{3-3}<\LARGE>{\blue I} & & &\Block{3-3}<\LARGE>{ \blue 0}&&&\Block{3-3}<\LARGE>{\blue 0}&&\\
		\vdots &&  & &&&&&  \\
		v_{p_0}&& & &  \\
		u_1& \blue -1 & \blue \Cdots&\blue -1&\blue -1&\blue \Cdots&\blue -1&\blue 0&\blue \Cdots&\blue 0\\
		u_2&\Block{3-3}<\LARGE>{\blue 0} & & &\Block{3-3}<\LARGE>{\blue I}&&&\Block{3-3}<\LARGE>{\blue 0}&&\\
		\vdots &&  & &&&&&  \\
		u_{p_4}&& & &  \\
		y_1&\blue -1 &\blue \Cdots&\blue -1 &\blue 0&\blue \Cdots& \blue 0&\blue -1&\blue \Cdots&\blue -1\\
		y_2&\Block{3-3}<\LARGE>{\blue 0} & & &\Block{3-3}<\LARGE>{\blue 0}&&&\Block{3-3}<\LARGE>{\blue I}&&\\
		\vdots &&  & &&&&&  \\
		y_{p_1}&& & &  \\
	\end{pNiceMatrix}
	,
	\]

	\begin{equation*}
		\begin{split}
			\theta=
			\begin{pNiceMatrix}[first-col]
				v_1& d\\
				\vdots&\vdots\\
				v_{p_0} &0 \\
				u_1&\displaystyle{f+\sum_{i\in X_3} b_i\beta_i+\sum_{j\in X_2\backslash\{z_1\}} c_j\beta_j} \\
				u_2&0\\
				\vdots&\vdots\\
				u_{p_4}&0\\
				y_1& \displaystyle{\sum_{i\in X_3} (b_i+1)\beta_i+\sum_{j\in X_2\backslash\{z_1\}} c_j\beta_j} \\
				y_2&0\\
				\vdots&\vdots\\
				y_{p_1}&0\\
			\end{pNiceMatrix},
		\end{split}
		\quad
		\begin{split}
			\theta'=
			\begin{pmatrix}
				d' \\
				\vdots\\
				0\\
				\displaystyle{f'+\sum_{i\in X_3} b_i\gamma_i+\sum_{j\in X_2\backslash\{z_1\}} c_j\gamma_j} \\
				0\\
				\vdots\\
				0\\
				\displaystyle{\sum_{i\in X_3} (b_i+1)\gamma_i+\sum_{j\in X_2\backslash\{z_1\}} c_j\gamma_j} \\
				0\\
				\vdots\\
				0
			\end{pmatrix}.
		\end{split}
	\end{equation*}
	
	Consider two fans $\Sigma_1,\Sigma_2$ with the rays coming from the rows of of $F$ but with different primitive collections. For $\Sigma_1$, we consider the primitive collections given by $\{v_1,\ldots,v_{p_0},u_1,\ldots,u_{p_4}\}, \{y_1,\ldots,y_{p_1}\}$. The primitive relations are given by
	\begin{equation*}
		\begin{split}
			v_1+\cdots+v_{p_0}+u_1+\cdots+u_{p_4}&= 0\\
			y_1+\cdots+y_{p_1}&= u_1+\cdots +u_{p_4}.
		\end{split}
	\end{equation*}  
	
	The support function $\varphi_{\theta}$ is convex with respect to $\Sigma_1$ if and only if
	\begin{equation*}
		\begin{split}
			\varphi_{\theta}( v_1+\cdots+v_{p_0}+u_1+\cdots+u_{p_4}) &\geq \varphi_{\theta}(v_1)+\cdots+\varphi_{\theta}(v_{p_0})+\varphi_{\theta}(u_1)+\cdots+\varphi_{\theta}(u_{p_4})\\
			\varphi_{\theta}( y_1+\cdots+y_{p_1})&\geq \varphi_{\theta}(y_1)+\cdots+\varphi_{\theta}(y_{p_1}).
		\end{split}
	\end{equation*}
	Simplifying the first inequality gives
	\[ 0\geq -d -\Bigg(f+\sum_{i\in X_3} b_i\beta_i+\sum_{j\in X_2\backslash\{z_1\}} c_j\beta_j \Bigg),\]
	which holds in this case because $d,b_i,c_j,\beta_j$ are all non-negative. If we simplify the second inequality, we get 
	\[ -\Bigg(f+\sum_{i\in X_3} b_i\beta_i+\sum_{j\in X_2\backslash\{z_1\}} c_j\beta_j \Bigg)\geq -\Bigg(\sum_{i\in X_3} (b_i+1)\beta_i+\sum_{j\in X_2\backslash\{z_1\}} c_j\beta_j\Bigg)\]
	which holds if and only if $\sum_{i\in X_3}\beta_i\geq f$. Similarly, we get  $\varphi_{\theta'}$ is convex if and only if $\sum_{i\in X_3} \gamma_i\geq f' $ and $\varphi_{(\theta+\theta')}$ is convex if and only if $\sum_{i\in X_3} \alpha_i\geq f+f'$. As a result, if 
	\[f+f' \leq \sum_{i\in X_3} \alpha_i \leq f+e+f'+e',\] 
	\noindent
	we find  $\beta_J,\gamma_J \in \mathbb{Z}^{|J|} $ such that 
	\[f \leq \sum_{i\in X_3} \beta_i  \leq f+e, \quad f' \leq \sum_{i\in X_3} \gamma_i \leq f'+e'.\]
	We have seen that $\theta,\theta'$ are convex in this case.
	
	For the fan $\Sigma_2$, we consider the primitive collections given by $\{v_1,\ldots,v_{p_0},y_1,\ldots,y_{p_1}\}$, $\{u_1,\ldots,u_{p_4}\}$. The primitive relations are given by
	\begin{equation*}
		\begin{split}
			v_1+\cdots+v_{p_0}+ y_1+\cdots+y_{p_1}&=0\\
			u_1+\cdots+u_{p_4}&=y_1+\cdots+y_{p_1}.
		\end{split}    
	\end{equation*}
	
	The support function $\varphi_{\theta}$ is convex with respect to $\Sigma_1$ if and only if
	\begin{equation*}
		\begin{split}
			\varphi_{\theta}( v_1+\cdots+v_{p_0}+y_1+\cdots+y_{p_1}) &\geq \varphi_{\theta}(v_1)+\cdots+\varphi_{\theta}(v_{p_0})+\varphi_{\theta}(y_1)+\cdots+\varphi_{\theta}(y_{p_4})\\
			\varphi_{\theta}( u_1+\cdots+u_{p_4})&\geq \varphi_{\theta}(u_1)+\cdots+\varphi_{\theta}(u_{p_4}).
		\end{split}
	\end{equation*}
	Simplifying the first inequality gives
	\[ 0\geq -d -\Bigg(f+\sum_{i\in X_3} (b_i+1)\beta_i+\sum_{j\in X_2\backslash\{z_1\}} c_j\beta_j \Bigg),\]
	which holds in this case because $d,b_i,c_j,\beta_j$ are all non-negative. If we simplify the second inequality, we get 
	\[ -\Bigg(\sum_{i\in X_3} (b_i+1)\beta_i+\sum_{j\in X_2\backslash\{z_1\}} c_j\beta_j \Bigg)\geq -\Bigg(f+\sum_{i\in X_3} b_i\beta_i+\sum_{j\in X_2\backslash\{z_1\}} c_j\beta_j\Bigg)\]
	which holds if and only if $\sum_{i\in X_3}\beta_i\leq f$. Similarly, we get  $\varphi_{\theta'}$ is convex if and only if $\sum_{i\in X_3} \gamma_i\leq f' $ and $\varphi_{(\theta+\theta')}$ is convex if and only if $\sum_{i\in X_3} \alpha_i\leq f+f'$. As a result, if 
	\[0 \leq \sum_{i\in X_3} \alpha_i \leq f+f',\]
	we find  $\beta_J,\gamma_J \in \mathbb{Z}^{|J|}$ such that 
	$0 \leq \sum_{i\in X_3} \beta_i  \leq f$ and $0 \leq \sum_{i\in X_3} \gamma_i \leq f'$. We have seen that $\theta,\theta'$ are convex in this case.
	
	\textbf{Case 3.}
	If $p_1>p_4=1$, then we have
	\setcounter{MaxMatrixCols}{20}
	\[
	F=
	\begin{pNiceMatrix}[first-row,first-col,margin,hvlines]
		& & \Ldots[line-style={solid,<->},shorten=0pt]^{p_0+p_1-1 \text{ columns}} \\
		v_1& \Block{3-3}<\LARGE>{\blue I} & & &\Block{3-3}<\LARGE>{ \blue 0}&&\\
		\vdots &&  & &&&  \\
		v_{p_0}&& & &  \\
		u_1& \blue -1 & \blue \Cdots&\blue -1&\blue 0&\blue \Cdots&\blue 0\\
		y_1&\blue -1 &\blue \Cdots&\blue -1 &\blue -1&\blue \Cdots&\blue -1\\
		y_2&\Block{3-3}<\LARGE>{\blue 0} &&&\Block{3-3}<\LARGE>{\blue I}&&\\
		\vdots &&  & &&&  \\
		y_{p_1}&& & &  \\
	\end{pNiceMatrix}
	,
	\]

	\begin{equation*}
		\begin{split}
			\theta=
			\begin{pNiceMatrix}[first-col]
				v_1& d\\
				\vdots&\vdots\\
				v_{p_0} &0 \\
				u_1&\displaystyle{f+\sum_{i\in X_3} b_i\beta_i+\sum_{j\in X_2\backslash\{z_1\}} c_j\beta_j} \\
				y_1& \displaystyle{\sum_{i\in X_3} (b_i+1)\beta_i+\sum_{j\in X_2\backslash\{z_1\}} c_j\beta_j} \\
				y_2&0\\
				\vdots&\vdots\\
				y_{p_1}&0\\
			\end{pNiceMatrix},
		\end{split}
		\quad
		\begin{split}
			\theta'=
			\begin{pmatrix}
				d' \\
				\vdots\\
				0\\
				\displaystyle{f'+\sum_{i\in X_3} b_i\gamma_i+\sum_{j\in X_2\backslash\{z_1\}} c_j\gamma_j} \\
				\displaystyle{\sum_{i\in X_3} (b_i+1)\gamma_i+\sum_{j\in X_2\backslash\{z_1\}} c_j\gamma_j} \\
				0\\
				\vdots\\
				0
			\end{pmatrix}.
		\end{split}
	\end{equation*}
	
	In this case, we consider the fan $\Sigma$ with ray generators given by the rows of $F$ and primitive collections given by $\{v_1,\ldots,v_{p_0},u_1\}, \{y_1,\ldots,y_{p_1}\}$. The primitive relations are given by
	\begin{equation*}
		\begin{split}
			v_1+\cdots+v_{p_0}+u_1&=0\\
			y_1+\cdots+y_{p_1}&=u_1.
		\end{split}
	\end{equation*}
	It is a smooth complete fan in $\mathbb{R}^{p_0+p_1-1}$ with $p_0+p_1+1$ rays. Next, we will determine when $\varphi_{\theta},\varphi_{\theta'},\varphi_{(\theta+\theta')}$ are convex with respect to the fan $\Sigma$. The support function $\varphi_\theta$ is convex if and only if 
	
	\begin{equation*}
		\begin{split}
			\varphi_{\theta}(v_1+\cdots+v_{p_0}+u_1) &\geq \varphi_{\theta}(v_1)+\cdots\varphi_{\theta}(v_{p_0})+\varphi_{\theta}(y_1)\\
			\varphi_{\theta}( y_1+\cdots+y_{p_1})&\geq \varphi_{\theta}(y_1)+\cdots+ \varphi_{\theta}(y_{p_1}).
		\end{split}
	\end{equation*}
	
	Simplifying the first inequality gives
	\[0\geq -d -\Bigg(\sum_{i\in X_3} b_i\beta_i+\sum_{j\in X_2\backslash\{z_1\}} c_j\beta_j\Bigg), \]
	which holds in this case because $d,b_i,c_j,\beta_j$ are all non-negative. If we simplify the second inequality, we get 
	\[-\Bigg(f+\sum_{i\in X_3} b_i\beta_i+\sum_{j\in X_2\backslash\{z_1\}} c_j\beta_j\Bigg) \geq -\Bigg(\sum_{i\in X_3} (b_i+1)\beta_i+\sum_{j\in X_2\backslash\{z_1\}} c_j\beta_j\Bigg), \]
	which holds if and only if $\sum_{i\in X_3} \beta_i\geq f $. Similarly, we get  $\varphi_{\theta'}$ is convex if and only if $\sum_{i\in X_3} \gamma_i\geq f' $ and $\varphi_{(\theta+\theta')}$ is convex if and only if $\sum_{i\in X_3} \alpha_i\geq f+f'$. As a result, if 
	\[f+f' \leq \sum_{i\in X_3} \alpha_i \leq f+e+f'+e',\] 
	we find  $\beta_J,\gamma_J \in \mathbb{Z}^{|J|} $ such that 
	$f \leq \sum_{i\in X_3} \beta_i  \leq f+e$ and $f' \leq \sum_{i\in X_3} \gamma_i \leq f'+e'$. We have seen that $\theta,\theta'$ are convex in this case.
	
	But if $0\leq \sum_{j\in J}\alpha_j< f+f'$ then $\theta+\theta'$ is not convex. Nonetheless, we can see that the half-space defined by row $u_1$ is not a supporting half-space for the polytope, so we translate it without altering the polytope and construct a convex support function. Define
	\begin{equation*}
		\widetilde{\theta}_{j}=
		\begin{cases}
			\theta_{y_1}\text{~if~$j=u_1$}\\
			\theta_j\text{~otherwise,}  \end{cases}
	\end{equation*}
	
	\begin{equation*}
		\widetilde{\theta'}_{j}=
		\begin{cases}
			\theta'_{y_1}\text{~if~$j=u_1$}\\
			\theta'_j\text{~otherwise,}  \end{cases}
	\end{equation*}

	\begin{equation*}
		\widetilde{(\theta+\theta')}_{j}=
		\begin{cases}
			(\theta+\theta')_{y_1}\text{~if~$j=u_1$}\\
			(\theta+\theta')_j\text{~otherwise.}  \end{cases}
	\end{equation*}
	It easy to see that $\widetilde{\theta},\widetilde{\theta'}$ are convex and
	\begin{align*}
		\widetilde{P}&=P(F,\widetilde{\theta}), & \widetilde{Q}&=P(F,\widetilde{\theta'}), &\widetilde{P+Q}&=P(F,\widetilde{\theta}+\widetilde{\theta'}).
	\end{align*}
	
	\textbf{Case 4.}
	If $p_4>p_1=1$, then we have

	\setcounter{MaxMatrixCols}{20}
	\[
	F=
	\begin{pNiceMatrix}[first-row,first-col,margin,hvlines]
		& & \Ldots[line-style={solid,<->},shorten=0pt]^{p_0+p_4-1 \text{ columns}} \\
		v_1& \Block{3-3}<\LARGE>{\blue I} & & &\Block{3-3}<\LARGE>{ \blue 0}&&\\
		\vdots &&  & &&&  \\
		v_{p_0}&& & &  \\
		u_1& \blue -1 & \blue \Cdots&\blue -1&\blue -1&\blue \Cdots&\blue -1\\
		u_2&\Block{3-3}<\LARGE>{\blue 0} & & &\Block{3-3}<\LARGE>{\blue I}&&\\
		\vdots &&  & &&&  \\
		u_{p_4}&& & &  \\
		y_1&\blue -1 &\blue \Cdots&\blue -1 &\blue 0&\blue \Cdots& \blue 0 \\
	\end{pNiceMatrix}
	,
	\]

	\begin{equation*}
		\begin{split}
			\theta=
			\begin{pNiceMatrix}[first-col]
				v_1& d\\
				\vdots&\vdots\\
				v_{p_0} &0 \\
				u_1&\displaystyle{f+\sum_{i\in X_3} b_i\beta_i+\sum_{j\in X_2\backslash\{z_1\}} c_j\beta_j} \\
				u_2&0\\
				\vdots&\vdots\\
				u_{p_4}&0\\
				y_1& \displaystyle{\sum_{i\in X_3} (b_i+1)\beta_i+\sum_{j\in X_2\backslash\{z_1\}} c_j\beta_j}
			\end{pNiceMatrix},
		\end{split}
		\quad
		\begin{split}
			\theta'=
			\begin{pmatrix}
				d'\\
				\vdots\\
				0 \\
				\displaystyle{f'+\sum_{i\in X_3} b_i\gamma_i+\sum_{j\in X_2\backslash\{z_1\}} c_j\gamma_j} \\
				0\\
				\vdots\\
				0\\
				\displaystyle{\sum_{i\in X_3} (b_i+1)\gamma_i+\sum_{j\in X_2\backslash\{z_1\}} c_j\gamma_j} \\
			\end{pmatrix}.
		\end{split}
	\end{equation*}
	
	In this case, we consider the fan $\Sigma$ with ray generators given by the rows of $F$ and primitive collections given by $\{v_1,\ldots,v_{p_0},y_1\}, \{u_1,\ldots,u_{p_4}\}$. The primitive relations are given by
	\begin{equation*}
		\begin{split}
			v_1+\cdots+v_{p_0}+y_1&=0\\
			u_1+\cdots+u_{p_4}&=y_1.
		\end{split}
	\end{equation*}
	It is a smooth complete fan in $\mathbb{R}^{p_0+p_4-1}$ with $p_0+p_4+1$ rays. Next, we will determine when $\varphi_{\theta},\varphi_{\theta'},\varphi_{(\theta+\theta')}$ are convex with respect to the fan $\Sigma$. The support function $\varphi_\theta$ is convex if and only if 
	
	\begin{equation*}
		\begin{split}
			\varphi_{\theta}(v_1+\cdots+v_{p_0}+y_1) &\geq \varphi_{\theta}(v_1)+\cdots\varphi_{\theta}(v_{p_0})+\varphi_{\theta}(y_1)\\
			\varphi_{\theta}( u_1+\cdots+u_{p_4})&\geq \varphi_{\theta}(u_1)+\cdots+ \varphi_{\theta}(u_{p_4}).
		\end{split}
	\end{equation*}
	
	Simplifying the first inequality gives
	\[0\geq -d -\Bigg(\sum_{i\in X_3} (b_i+1)\beta_i+\sum_{j\in X_2\backslash\{z_1\}} c_j\beta_j\Bigg), \]
	which holds in this case because $d,b_i,c_j,\beta_j$ are all non-negative. If we simplify the second inequality, we get 
	\[-\Bigg(\sum_{i\in X_3} (b_i+1)\beta_i+\sum_{j\in X_2\backslash\{z_1\}} c_j\beta_j\Bigg) \geq -\Bigg(f+\sum_{i\in X_3} b_i\beta_i+\sum_{j\in X_2\backslash\{z_1\}} c_j\beta_j\Bigg), \]
	which holds if and only if $\sum_{i\in X_3} \beta_i\leq f $. Similarly, we get  $\varphi_{\theta'}$ is convex if and only if $\sum_{i\in X_3} \gamma_i\leq f' $ and $\varphi_{(\theta+\theta')}$ is convex if and only if $\sum_{i\in X_3} \alpha_i\leq f+f'$. As a result, if 
	\[0 \leq \sum_{i\in X_3} \alpha_i \leq f+f',\]
	we find  $\beta_J,\gamma_J \in \mathbb{Z}^{|J|} $ such that 
	$0 \leq \sum_{i\in X_3} \beta_i  \leq f$ and $0 \leq \sum_{i\in X_3} \gamma_i \leq f'$. We have seen that $\theta,\theta'$ are convex in this case.
	
	But if $f+f'< \sum_{j\in J}\alpha_j\leq f+e+f'+e'$ then $\theta+\theta'$ is not convex. Nonetheless, we can see that the half-space defined by row $y_1$ is not a supporting half-space for the polytope, so we translate it without altering the polytope and construct a convex support function. Define 
	\begin{equation*}
		\widetilde{\theta}_{j}=
		\begin{cases}
			\theta_{u_1}\text{~if~$j=y_1$}\\
			\theta_j\text{~otherwise,}  \end{cases}
	\end{equation*}
	
	\begin{equation*}
		\widetilde{\theta'}_{j}=
		\begin{cases}
			\theta'_{u_1}\text{~if~$j=y_1$}\\
			\theta'_j\text{~otherwise,}  \end{cases}
	\end{equation*}
	
	\begin{equation*}
		\widetilde{(\theta+\theta')}_{j}=
		\begin{cases}
			(\theta+\theta')_{u_1}\text{~if~$j=y_1$}\\
			(\theta+\theta')_j\text{~otherwise.}  \end{cases}
	\end{equation*}
	It easy to see that $\widetilde{\theta},\widetilde{\theta'}$ are convex and
	\begin{align*}
		\widetilde{P}&=P(F,\widetilde{\theta}), & \widetilde{Q}&=P(F,\widetilde{\theta'}), &\widetilde{P+Q}&=P(F,\widetilde{\theta}+\widetilde{\theta'}).
	\end{align*}
\end{proof}	

\begin{remark}
	We can use the strategy of elimination variables together with induction to give a different proof for the fact that IDP holds for any smooth complete splitting fan.
\end{remark}

\bibliographystyle{alpha}
\bibliography{references} 	
\end{document}